\colorlet{lred}{red!40}
\colorlet{lgreen}{green!40}
\colorlet{lblue}{blue!40}
\newcommand{\rr}{\mathbf r}
\newcommand{\rss}{\mathbf s}
\newcommand{\Lsource}{{\boldsymbol{h}}}
\newcommand{\R}{\mathbb R}
\theoremstyle{theorem}
\newtheorem{theorem}{Theorem}
\newcommand{\trans}{\mathsf{T}}
\newcommand{\sign}{\mathrm{sign}}
\newcommand{\Fullop}{\mathcal A}
\newcommand{\Id}{\operatorname{Id}}
\newcommand{\Wave}{\mathcal  W}
\newcommand{\samp}{\mathbf S}
\newcommand{\Samp}{\mathcal S}
\newcommand{\fv}{\boldsymbol{f}}
\newcommand{\DDelta}{{\mathcal{L}}}
\newcommand{\ppartial}{{\mathcal{D}}}
\newcommand{\Eeta}{{\boldsymbol{\eta}}}
\newcommand{\Source}{{\boldsymbol{f}}}
\newcommand{\Data}{\boldsymbol{g}}
\newcommand{\hh}{\boldsymbol{z}}
\newcommand{\source}{p_0}
\newcommand{\UU}{\mathcal U}
\newcommand{\NN}{\mathcal N}
\newcommand{\BP}{\mathcal B}
\newcommand{\rec}{\mathcal R}
\newcommand{\noise}{\eps}
\def\BibTeX{{\rm B\kern-.05em{\sc i\kern-.025em b}\kern-.08em
    T\kern-.1667em\lower.7ex\hbox{E}\kern-.125emX}}
\newcommand{\hilbert}[1]{\mathcal{H}}
\newcommand{\edot}{\,\cdot\,}
\newcommand{\Po}{\mathcal P}
\newcommand{\Io}{\mathbf I}
\newcommand{\Kern}{\operatorname{Ker}}
\DeclareMathOperator*{\argmin}{arg\,min}
\newcommand{\supp}{\operatorname{supp}}
\newcommand{\rmd}{\mathrm d}
\newcommand{\eps}{\epsilon}
\newcommand{\al}{\alpha}
\newcommand{\prox}{\operatorname{prox}}
\newcommand\sset[1]{\{#1\}}
\newcommand\abs[1]{\left\vert#1\right\vert}
\newcommand\set[1]{\left\{#1\right\}}
\newcommand\sabs[1]{\lvert#1\rvert}
\newcommand\norm[1]{\Vert#1\Vert}
\newcommand\snorm[1]{\Vert#1\Vert}
\newcommand{\kl}[1]{\left(#1\right)}
\newcommand{\Xin}{\boldsymbol b}
\newcommand{\Xout}{\boldsymbol f}
\colorlet{lred}{red!40}
\colorlet{lgreen}{green!40}
\colorlet{lblue}{blue!40}
\definecolor{mixc}{cmyk}{0.5,0.5,0.5,0}
\colorlet{mixl}{mixc!30}
\numberwithin{equation}{section}
\numberwithin{theorem}{section}
\numberwithin{figure}{section}
\author{Stephan Antholzer}
\affil{Department of Mathematics, University of Innsbruck\authorcr
Technikerstrasse 13, 6020 Innsbruck, Austria\authorcr
{\tt  stephan.antholzer@uibk.ac.at}}
\author{Johannes Schwab}
\affil{Department of Mathematics, University of Innsbruck\authorcr
Technikerstrasse 13, 6020 Innsbruck, Austria\authorcr
{\tt johannes.schwab@uibk.ac.at}}
\author{Markus Haltmeier}
\affil{Department of Mathematics, University of Innsbruck\authorcr
Technikerstrasse 13, 6020 Innsbruck, Austria\authorcr
E-mail: {\tt markus.haltmeier@uibk.ac.at}}
\date{Januar 30, 2018}
\begin{document}

\title{Deep learning versus $\ell^1$-minimization for compressed sensing photoacoustic tomography}

\maketitle

\begin{abstract}

We investigate compressed sensing (CS) techniques for reducing the number of measurements in photoacoustic tomography (PAT). High resolution imaging from CS data requires particular image reconstruction algorithms.  The most established reconstruction techniques for that purpose use sparsity and $\ell^1$-minimization.  Recently, deep learning appeared as a new paradigm for CS and other inverse problems.    In this paper, we compare a recently invented joint $\ell^1$-minimization algorithm with two deep learning methods, namely a residual network and an approximate nullspace network. We present numerical results showing that all developed techniques   perform well for deterministic sparse measurements as well as for random Bernoulli measurements. For the deterministic sampling, deep learning shows more accurate results, whereas for Bernoulli measurements the $\ell^1$-minimization algorithm performs best. Comparing the implemented deep learning approaches, we show that the nullspace network uniformly outperforms the residual network in terms of the mean squared error (MSE).

\bigskip\noindent
\textbf{Keywords:}
Compressed sensing, sparsity, $\ell^1$-minimization, deep learning, residual learning, nullspace network

\end{abstract}

\section{Introduction}

Compressed sensing (CS)  allows to reduce the number of
measurements in photoacoustic  tomography (PAT) while preserving  high spatial resolution.
A reduced number of  measurements can increase the measurement  speed and reduce system
costs  \cite{haltmeier2018sparsification,sandbichler2015novel,haltmeier2016compressed,betcke2016acoustic,provost2009application} .
However, CS PAT  image reconstruction requires special algorithms to achive high resolution
imaging. In this work, we compare $\ell^1$-minimization  and deep learning algorithms for
 2D PAT. Among others, the two-dimensional case arises in PAT with integrating line detectors \cite{paltauf2007photacoustic,burgholzer2007temporal}.

In the case that a sufficiently  large number of detectors is used, according to Shannon's sampling theory,
implementations of full data methods yield almost artifact free reconstructions  \cite{haltmeier2016sampling}. As the fabrication of an array of detectors is demanding,
experiments using integrating line detectors are often carried out using a single line detector, scanned on circular paths using scanning stages~\cite{NusEtAl10, GruEtAl10}, which is very time consuming.  Recently, systems using arrays of $64$ parallel line detectors have been demonstrated~\cite{gratt201564line, beuermarschallinger2015photacoustic}. To keep production costs low and to allow fast imaging, the number of measurements  will typically be kept much smaller than advised by Shannon's sampling theory and one has to deal with highly under-sampled data.

After discretization, image reconstruction in CS PAT consists in solving the inverse
problem
\begin{equation} \label{eq:ip}
\Data =   \Fullop  \fv  + \noise   \,,
 \end{equation}
where  $\fv\in \R^{n}$ is the discrete photoacoustic (PA)   source  to be reconstructed,
$\Data  \in \R^{mQ}$ are the given CS data, $\noise$  is the noise in the
data and $\Fullop \colon \R^n \to \R^{mQ} $ is the forward matrix.  The forward matrix is the
product of the PAT full data problem and the compressed sensing measurement matrix.
Making CS measurements in PAT implies that  $mQ \ll n$  and  therefore,
even in the  case of exact data, solving  \eqref{eq:ip} requires  particular
reconstruction algorithms.

\subsection{CS PAT recovery algorithms}

Standard  CS reconstruction techniques  for \eqref{eq:ip}
are based on sparse recovery via $\ell^1$-minimization.
These algorithms rely on sparsity of the unknowns
in a suitable basis or dictionary and special  incoherence of the  forward matrix.
See \cite{sandbichler2015novel,haltmeier2016compressed,betcke2016acoustic,provost2009application} for different CS approaches in PAT.
To guarantee sparsity of the unknowns, in
\cite{haltmeier2018sparsification} a new sparsification and corresponding joint
$\ell^1$-minimization   have been derived.
Recently, deep learning  appeared as a new  reconstruction paradigm
for CS  and other inverse problems.  
Deep learning approaches for PAT  can be found in
\cite{antholzer2018deep,antholzer2018photoacoustic,kelly2017deep,allman2018photoacoustic,schwab2018fast,hauptmann2018model,waibel2018reconstruction}.

In this work, we compare the performance of the joint  $\ell^1$-minimization  algorithm of \cite{haltmeier2018sparsification} with deep learning
approaches for CS PAT image reconstruction. For the latter we use the  residual network  \cite{jin2016deep,han2016deep,antholzer2018deep}
and the nullspace network \cite{mardani2017deep,schwab2018deep}. The nullspace network
includes a certain data consistency layer and even has been shown to be a
regularization method in~\cite{schwab2018deep}. Our results show that  the
nullspace network uniformly outperforms the residual network for CS PAT in terms of the mean squared error (MSE).

\subsection{Outline}

In Section~\ref{sec:cspat}, we present  the required background
from  CS PAT. The sparsification strategy and the joint  $\ell^1$-minimization algorithm  are summarized in Section \ref{sec:sparse}.
The employed  deep learning  image reconstruction strategies using a residual network
and  an (approximate) nullspace network  are described in  Section~\ref{sec:deep}.
In Section~\ref{sec:num} we present reconstruction results for sparse
 measurements and Bernoulli measurements.
 The paper ends with a discussion in  Section~\ref{sec:conclusion}.

\begin{psfrags}
\begin{figure}[htb!]
\begin{center}
  \includegraphics[width=1\columnwidth]{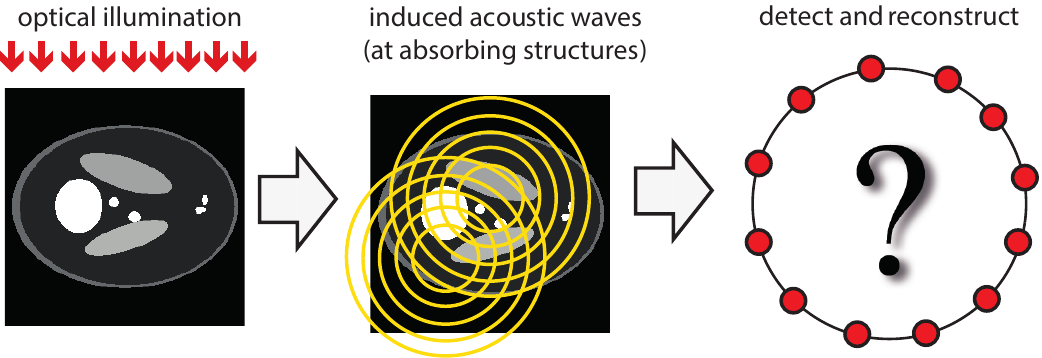}
\caption{\label{fig:pat} (a) An object is illuminated with a  short optical pulse;  (b) the absorbed light distribution causes an acoustic pressure; (c) the  acoustic pressure is measured outside the object and used to reconstruct an image of the interior.}
\end{center}
\end{figure}
\end{psfrags}

\section{Compressed photoacoustic tomography}

\label{sec:cspat}

\subsection{Photoacoustic tomography}

As illustrated in Figure~\ref{fig:pat}, PAT is based
on generating an acoustic wave inside some investigated object using short optical pulses.
Let us denote by $\source \colon \R^d \to \R$ the initial pressure
distribution which provides diagnostic information about the
patient and which is the quantity of interest  in  PAT \cite{kuchment2011mathematics,paltauf2007photacoustic,wang2006photoacoustic}.
For keeping the presentation simple and focusing on  the main ideas we only consider the  case of  $d=2$.
 Among others, the two-dimensional case arises in PAT with so called integrating line detectors \cite{paltauf2007photacoustic,burgholzer2007temporal}.
Further, we restrict ourselves to the case of a circular measurement geometry, where the acoustic measurements are made on a circle  surrounding the investigated object.

In two spatial dimensions, the induced  pressure in PAT satisfies the   2D wave equation
\begin{multline} \label{eq:wave}
\partial^2_t p (\rr,t)  - c^2 \Delta p(\rr,t) \\
= \delta'(t)  \source (\rr)  \quad \text{ for } (\rr,t) \in \R^2 \times \R_+ \,.
\end{multline}
Here  $\rr  \in \R^2$ is the spatial location, $t \in \R$ the time variable, $\Delta_{\rr}$ the spatial Laplacian, $c$ the speed of sound, and $\source (\rr)$ the  PA source that is  assumed to vanish
outside  the  disc $B_R \triangleq  \sset{x \in \R^2 \mid  \norm{x} < R}$ and  has to be recovered.
The wave equation \eqref{eq:wave} is augmented with
$p(\rr, t) =0$  on $\set{t < 0}$. The acoustic pressure is then uniquely defined and referred to as the causal solution of~\eqref{eq:wave}.

PAT in a circular measurement geometry consist in recovering the function $\source$ from measurements   of $p(\rss,t) $ on $\partial  B_R  \times  (0,\infty)$. In the case of full data, exact  and  stable PAT image reconstruction is  possible  \cite{haltmeier2017iterative,stefanov2009thermoacoustic} and several efficient methods for recovering
 $\Source$   are available. As an example, we mention the  FBP
formula derived in \cite{FinHalRak07},
\begin{equation} \label{eq:fbp2d}
          \source(\rr)
         =
        - \frac{1}{\pi R}
        \int_{\partial B_R}
        \int_{\abs{\rr-z}}^\infty
        \frac{ (\partial_t t p)(\rss, t)}{ \sqrt{t^2-\sabs{\rr-\rss}^2}}  \, \rmd t
        \rmd S(\rss)
         \,.
\end{equation}
Note the inversion operator in \eqref{eq:fbp2d} is also the adjoint of the
forward operator, see \cite{FinHalRak07}.

\subsection{Discretization}

In practical applications, the acoustic pressure  can only be
measured with a finite number of acoustic detectors.
The standard sampling scheme  for PAT in circular geometry  assumes
uniformly sampled  values
\begin{equation} \label{eq:data}
    p \kl{ \rss_k, t_\ell}
    \text{ for }
    ( k, \ell) \in \set{ 1, \dots,  M} \times \set{ 1, \dots,  Q }\,,
\end{equation}
  with
\begin{align}
     \rss_k
	 &\triangleq
     \begin{bmatrix} R \cos \kl{2\pi(k-1)/M} \\ R\sin \kl{2\pi(k-1)/M}  \end{bmatrix}
     \\
     t_\ell
	 &\triangleq
    2R (\ell-1) /(Q-1)
      \,.
\end{align}
The number $M$ of detector positions in  \eqref{eq:data} is directly related to the resolution of the final reconstruction. Namely,
\begin{equation} \label{eq:samplingcondition}
M \geq 2 R_0 \lambda_0
\end{equation}
equally spaced transducers are required  to stably recover any
PA source $\source$  that has  maximal essential wavelength  $\lambda_0$ and is supported in a disc
$B_{R_0} \subseteq  B_{R}$; see \cite{haltmeier2016sampling}.
Image reconstruction in this case can be performed   by discretizing the inversion formula  \eqref{eq:fbp2d}.
The     sampling  condition \eqref{eq:samplingcondition} requires a very high sampling rate,  especially when the PA source contains narrow features, such as blood vessels or sharp interfaces.

Note that temporal samples can easily be collected at a high sampling rate compared to the spatial sampling, where each sample requires a separate sensor.
It is therefore beneficial to keep $M$ as small as possible.
 Consequently, full sampling  in PAT is costly and time consuming and strategies for 
 reducing the number of detector locations are desirable.

\subsection{Compressive  measurements in PAT}

To reduce the number of measurements   we use CS measurements.
Instead of collecting $M$ individually sampled signals as in \eqref{eq:data}, we take general linear measurements
\begin{equation} \label{eq:cs}
	\Data(j, \ell ) \triangleq \sum_{k=1}^M
	\samp[j, k] p(\rr_k, t_\ell )
	\; \text{ for } j \in \set{  1, \dots, m} \,,
\end{equation}
with  $m \ll M$. Several choices for the measurement matrix $\samp$ are possible and have been used for CS PAT
\cite{sandbichler2015novel,haltmeier2016compressed,betcke2016acoustic}.
In this work, we  take $\samp$ as deterministic sparse subsampling  matrix or
Bernoulli random matrix; see Subsection \ref{eq:setup}.

Let us denote by $\Wave  \in  \R^{MQ \times n} $ the  discretized  solution operator
of the wave equation and by  $ \Samp  \triangleq  \samp  \otimes \Io \in \R^{ mQ \times MQ    }$
the Kronecker (or tensor)  product   between the CS measurement matrix  $\samp$
and the identity matrix $\Io$. Then the CS data \eqref{eq:cs} written as column vector
$\Data \in \R^{mQ}$ are given by
\begin{equation} \label{eq:ip0}
\Data =    \Fullop   \fv  \quad \text { with }  \Fullop  \triangleq  \Samp  \circ  \Wave  \in \R^{mQ \times n} \,.
 \end{equation}
In the case of CS measurements we have $mQ \ll n$ and
therefore  \eqref{eq:ip0} is highly underdetermined and image reconstruction
requires special reconstruction algorithms.

\section{Joint $\ell^1$-minimization for CS PAT}
\label{sec:sparse}

Standard  CS  image reconstruction is based on $\ell^1$ minimization and
sparsity of the unknowns to be recovered. In \cite{haltmeier2018sparsification} we introduced
a sparse recovery strategy that we will use in the present paper and recall below.

\subsection{Background from $\ell^1$-minimization}

An element $\Lsource \in \R^n$ is called $s$-sparse if
it contains at most  $s$ nonzero elements. If we are given measurements $\Fullop \Lsource  =  \Data$
where $\Lsource \in \R^n$ and $\Data  \in\R^{mQ}$  with $mQ \ll n$,
then stable recovery of  $\Lsource$  from $\Data$  via  $\ell^1$-minimization
can be guaranteed if $\Lsource$ is sparse and the matrix $\Fullop$ satisfies the
restricted isometry property of order $2s$.
The latter property  means that for all $2s$-sparse vectors $\hh \in \R^n$ we have
\begin{equation} \label{eq:RIP}
(1-\delta) \norm{\hh}^2\leq \norm{ \Fullop \hh}^2 \leq(1+\delta) \norm{\hh}^2 \,,
\end{equation}
for an RIP constant $\delta < 1 / \sqrt{2}$; see~\cite{foucart2013mathematical}.

Bernoulli random matrices satisfy the RIP with
high probability~\cite{baraniuk2008simple} whereas the
subsampling matrix clearly does not satisfy the RIP.
In the case of CS PAT, the forward matrix is given by
$ \Fullop = (\samp  \otimes \Io ) \circ \Wave$.
It is not known whether  $\Fullop$ satisfies the RIP for either
the Bernoulli of the subsampling matrix. In such situations one may use
the following stable  reconstruction result from inverse problems theory.

\begin{theorem}[$\ell^1$-minimization] \label{thm:ell1}
Let $\Fullop \in \R^{mQ \times n}$ and $\Lsource \in \R^{n}$
Assume
\begin{align} \label{eq:ssc-1}
      &\exists \Eeta \in \R^{mQ} \colon \Fullop^\trans  \Eeta \in  \sign(\Lsource)
      \\ \label{eq:ssc-2}
      &\abs{(\Fullop^\trans  \Eeta)_i} <  1
       \text{ for } i \not \in \supp (\Lsource)\,,
      \end{align}
where $\sign(\Lsource)$ is the set valued signum function  and
 $\supp (\Lsource)$ the set of all nonzero entries of $\Lsource$,
and that the restriction of $\Fullop$  to  the subspace  spanned by $e_i$ for
$ i \in \supp (\Lsource)$ is injective.
Then for any  $\Data^\delta  \in \R^{mQ}$ with  $\snorm{ \Fullop \Lsource  -  \Data^\delta}_2 \leq \delta$,
any minimizer of the   $\ell^1$-Tikhonov functional
\begin{equation} \label{eq:tikhonov}
\Lsource_\beta^\delta \in \argmin_{\hh} \frac{1}{2} \norm{ \Fullop \hh  -  \Data^\delta }_2^2 + \beta \norm{\hh}_1
\end{equation}
satisfies $\snorm{ \Lsource_\beta^\delta - \Lsource }_2 = \mathcal{O} (\delta)$
provided  $\beta \asymp \delta$. In particular, $\Lsource$ is the unique
$\norm{\edot}_1$-minimizing solution of $\Fullop \hh= \Data$.
\end{theorem}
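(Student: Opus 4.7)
My plan follows the classical source condition / Bregman distance argument of Burger--Osher and Grasmair--Haltmeier--Scherzer. The dual element $\Eeta$ together with the relation $\Fullop^\trans \Eeta \in \sign(\Lsource)$ is the standard source condition for $\ell^1$-regularization, and the strict inequality \eqref{eq:ssc-2} is the additional complementary slackness needed to get a rate on the full error (not just on the Bregman distance). Set $I \triangleq \supp(\Lsource)$ and write $\xi \triangleq \Fullop^\trans \Eeta \in \partial \snorm{\edot}_1(\Lsource)$.

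First I would use the minimizing property of $\Lsource_\beta^\delta$ against the admissible candidate $\Lsource$ in \eqref{eq:tikhonov} and the noise bound $\snorm{\Fullop\Lsource-\Data^\delta}_2\le\delta$ to obtain
\begin{equation*}
\tfrac{1}{2}\snorm{\Fullop\Lsource_\beta^\delta-\Data^\delta}_2^2 + \beta\bigl(\snorm{\Lsource_\beta^\delta}_1 - \snorm{\Lsource}_1\bigr) \le \tfrac{\delta^2}{2}.
\end{equation*}
I then insert the Bregman distance $D(\hh,\Lsource) \triangleq \snorm{\hh}_1 - \snorm{\Lsource}_1 - \inner{\xi}{\hh-\Lsource}$ and rewrite $\inner{\xi}{\Lsource_\beta^\delta-\Lsource} = \inner{\Eeta}{\Fullop\Lsource_\beta^\delta-\Fullop\Lsource}$, which after adding/subtracting $\Data^\delta$ and applying Cauchy--Schwarz plus a Young inequality (to absorb half of the residual term on the left) yields the basic estimate
\begin{equation*}
\tfrac{1}{4}\snorm{\Fullop\Lsource_\beta^\delta-\Data^\delta}_2^2 + \beta D(\Lsource_\beta^\delta,\Lsource) \le \tfrac{\delta^2}{2} + \beta\snorm{\Eeta}_2\delta + \beta^2\snorm{\Eeta}_2^2.
\end{equation*}
Choosing $\beta\asymp\delta$ this gives $D(\Lsource_\beta^\delta,\Lsource)=\mathcal{O}(\delta)$ and $\snorm{\Fullop\Lsource_\beta^\delta-\Data^\delta}_2=\mathcal{O}(\delta)$.

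Next I exploit the strict inequality \eqref{eq:ssc-2}. A direct decomposition of $D(\hh,\Lsource)$ into its contributions on $I$ and on $I^c$ (using $\xi_i=\sign(\Lsource_i)$ on $I$) shows
\begin{equation*}
D(\hh,\Lsource) \;\ge\; \gamma\,\snorm{\hh_{I^c}}_1,\qquad \gamma \triangleq 1 - \max_{i\notin I}\abs{(\Fullop^\trans\Eeta)_i} \;>\; 0.
\end{equation*}
Applied to $\hh=\Lsource_\beta^\delta$ this gives $\snorm{(\Lsource_\beta^\delta)_{I^c}}_1=\mathcal{O}(\delta)$, so the off-support part is already controlled in $\ell^1$ (hence in $\ell^2$).

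For the on-support part I invoke the restricted injectivity of $\Fullop$ on $\mathrm{span}\{e_i : i\in I\}$: there exists $c>0$ with $\snorm{\Fullop\hh}_2\ge c\snorm{\hh}_2$ for every $\hh$ supported in $I$. Writing $(\Lsource_\beta^\delta)_I - \Lsource$ as supported in $I$ and applying $\Fullop$, I estimate
\begin{equation*}
\snorm{\Fullop\bigl((\Lsource_\beta^\delta)_I-\Lsource\bigr)}_2 \;\le\; \snorm{\Fullop\Lsource_\beta^\delta-\Data^\delta}_2 + \delta + \snorm{\Fullop}\,\snorm{(\Lsource_\beta^\delta)_{I^c}}_1,
\end{equation*}
which is $\mathcal{O}(\delta)$ by the two previous bullets. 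Dividing by $c$ and combining with the off-support estimate yields $\snorm{\Lsource_\beta^\delta-\Lsource}_2 = \mathcal{O}(\delta)$, the claimed convergence rate.

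Finally, uniqueness of $\Lsource$ as an $\ell^1$-minimizing solution of $\Fullop\hh=\Data$ follows by the same mechanism applied in the noiseless limit: any other solution $\hh$ satisfies $\Fullop(\hh-\Lsource)=0$, hence $\inner{\xi}{\hh-\Lsource}=0$, so $D(\hh,\Lsource)=\snorm{\hh}_1-\snorm{\Lsource}_1\le 0$; the lower bound $D(\hh,\Lsource)\ge\gamma\snorm{\hh_{I^c}}_1$ then forces $\hh$ to be supported in $I$, and restricted injectivity gives $\hh=\Lsource$. The only place where real care is needed is the Young-inequality balancing in the first step---picking the split so that the residual term is absorbed on the left while keeping the leading constant in the Bregman estimate finite---and making sure that $\snorm{\Eeta}_2$ enters only as a multiplicative constant (not inside the rate), which it does as long as $\Eeta$ is fixed by \eqref{eq:ssc-1}.
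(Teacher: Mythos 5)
Your proposal is correct and complete; the paper itself gives no proof but defers to Grasmair (2011), and your Bregman-distance/source-condition argument with the strict-complementarity bound $D(\hh,\Lsource)\ge\gamma\snorm{\hh_{I^c}}_1$ and restricted injectivity on the support is exactly the mechanism of that reference. Nothing is missing: the Young-inequality balancing, the off-support $\ell^1$ control, and the noiseless uniqueness argument all check out.
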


 \begin{proof}
 See \cite{Gra11}.
 \end{proof}

 In \cite{candes2005decoding,Gra11} it is shown that the RIP  implies the conditions in
 Theorem~\ref{thm:ell1}.  Moreover, the smaller $\supp (\Lsource)$, the easier the
 conditions in Theorems are  satisfied.
 Therefore, sufficient sparsity of the unknowns is a crucial condition
 for the success of $\ell^1$-minimization.

\subsection{Sparsification strategy}

The used CSPAT approach in \cite{haltmeier2018sparsification}
is based on following theorem which allows bringing
sparsity into play.

\begin{theorem} \label{thm:laplace}
Let  $\source$  be a given   PA source  vanishing outside $B_R$,
and let  $p$ denote the causal solution of~\eqref{eq:wave}.
Then  $\partial_t^2 p$  is the causal solution of
\begin{multline} \label{eq:wavesparse}
\partial^2_t q (\rr,t)  - c^2 \Delta q(\rr,t) \\
= \delta'(t)   c^2 \Delta \Source (\rr)  \quad \text{ for } (\rr,t) \in \R^2 \times \R_+ \,.
\end{multline}
In particular, up to  discretization error, we have
\begin{equation}
\forall \Source \in \R^n \colon \quad \ppartial_t^2 \Fullop  [\Source] = \Fullop  [c^2 \DDelta_\rr \Source] \,,
\end{equation}
where $\Fullop = (\samp  \otimes \Io ) \circ \Wave$ denotes the discrete
CS PAT forward  operator defined by \eqref{eq:ip0}, $\DDelta_\rr$ is the discretized
Laplacian,  and $\ppartial_t$ the discretized temporal derivate.
\end{theorem}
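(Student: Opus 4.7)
My plan is to prove the continuous statement first and then transfer it to the discrete one by linearity. The source term $\delta'(t)\source$ in \eqref{eq:wave} is the standard distributional encoding of the initial conditions $p(\rr,0^+) = \source(\rr)$ and $\partial_t p(\rr,0^+) = 0$; concretely, writing $p = \tilde p \, H(t)$ with $\tilde p$ smooth in $t$, we have $\partial_t^2 p = (\partial_t^2 \tilde p)\, H(t) + 2 (\partial_t \tilde p)(0^+)\, \delta(t) + \tilde p(0^+)\,\delta'(t)$, so matching with $\delta'(t)\source$ forces the two Cauchy data above. I would make this correspondence the first reduction.

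Next, set $q \triangleq \partial_t^2 p$. For $t>0$ the wave operator commutes with $\partial_t^2$, so $q$ solves the homogeneous wave equation $\partial_t^2 q - c^2\Delta q = 0$ pointwise. It therefore suffices to read off the Cauchy data of $q$ at $t=0^+$ and identify them with those produced by the right-hand side $\delta'(t)\,c^2\Delta\source$. Using the smooth representative $\tilde p$ and the wave equation itself,
\begin{align*}
q(\rr,0^+) &= \partial_t^2 \tilde p(\rr,0^+) = c^2 \Delta \tilde p(\rr,0^+) = c^2 \Delta \source(\rr), \\
\partial_t q(\rr,0^+) &= \partial_t^3 \tilde p(\rr,0^+) = c^2 \Delta (\partial_t \tilde p)(\rr,0^+) = 0.
\end{align*}
By the same distributional identification as in the first paragraph, these Cauchy data are exactly those encoded by the source $\delta'(t)\, c^2 \Delta\source$, so $q$ is the causal solution of \eqref{eq:wavesparse}. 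This establishes the continuous claim.

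For the discrete identity, the point is that the CS PAT forward operator $\Fullop = (\samp \otimes \Io)\circ \Wave$ is linear in the initial data and acts only through the wave operator $\Wave$; temporal differentiation on the measurement side and spatial discretization commute with $\samp\otimes\Io$. Applying $\Wave$ to the initial data $\Source$ produces a sampling of the causal solution $p$ of \eqref{eq:wave}, while applying $\Wave$ to $c^2\DDelta_\rr\Source$ produces a sampling of the causal solution $q$ of \eqref{eq:wavesparse}. By the continuous result, these two samplings coincide with $\partial_t^2 p$ versus $p$, up to the error of replacing $\partial_t^2$ by its finite-difference surrogate $\ppartial_t^2$ and $\Delta_\rr$ by $\DDelta_\rr$. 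Hence $\ppartial_t^2 \Fullop[\Source] = \Fullop[c^2\DDelta_\rr\Source]$ modulo the advertised discretization error.

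The only real obstacle is making the distributional matching of source-with-delta-prime-times-function to Cauchy data rigorous; once that is done, everything else is differentiation of the wave equation and commuting the spatial sampling/CS matrix past $\ppartial_t$. I would therefore spend the bulk of the write-up on the $t=0^+$ Cauchy data computation and treat the rest as a brief remark about commuting linear operators.
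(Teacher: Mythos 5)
Your argument is correct and is essentially the standard proof of this fact; note that the paper itself does not supply a proof but defers to the reference \cite{haltmeier2018sparsification}, where the same reasoning (interpret the $\delta'(t)$ source as the Cauchy data $p(\cdot,0^+)=\source$, $\partial_t p(\cdot,0^+)=0$, differentiate the homogeneous equation twice in $t$, and read off the new Cauchy data $c^2\Delta\source$ and $0$ via the equation itself) is used. So you are not taking a genuinely different route, you are simply filling in the omitted argument, and the structure --- reduction to Cauchy data, commutation of $\partial_t^2$ with the wave operator on $t>0$, identification of the data of $q=\partial_t^2 p$, then linearity plus commuting $\samp\otimes\Io$ past $\ppartial_t^2$ for the discrete claim --- is the right one. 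One small slip: in your jump formula the coefficient of $\delta(t)$ in $\partial_t^2(\tilde p\,H)$ is $(\partial_t\tilde p)(\cdot,0^+)$, not $2(\partial_t\tilde p)(\cdot,0^+)$, since
\begin{equation*}
\partial_t^2\bigl(\tilde p\,H\bigr)=(\partial_t^2\tilde p)\,H+(\partial_t\tilde p)(\cdot,0^+)\,\delta(t)+\tilde p(\cdot,0^+)\,\delta'(t)\,;
\end{equation*}
this does not affect your conclusion, because matching against $\delta'(t)\source$ still forces $\tilde p(\cdot,0^+)=\source$ and $(\partial_t\tilde p)(\cdot,0^+)=0$, but you should correct it in the write-up.
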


\begin{proof}
See \cite{haltmeier2018sparsification}.
\end{proof}

Typical  phantoms  consist of smoothly varying parts
and rapid changes  at interfaces. For such PA sources,
the modified source  $c^2 \DDelta_{\rr} \Source$
is sparse or at least compressible. The theory of
CS therefore predicts that the modified source
can be recovered by solving via $\ell^1$-minimization
 \begin{equation} \label{eq:L1exact}
\min_{ \Lsource}     \norm{\Lsource}_1
\quad \text{such that }     \Fullop \Lsource  =   \ppartial_t^2 \Data       \,.
\end{equation}
Having obtained an approximate minimizer  $\Lsource$ by either solving
\eqref{eq:L1exact} or  its relaxed version,  one can recover the original
PA source $\Source $ by subsequently solving the Poisson
equation $   \DDelta_{\rr} \Source  = \Lsource/c^2$ with zero
boundary conditions. Using  the above two-stage procedure,
we observed disturbing low frequency artifacts in the reconstruction.
Therefore, in \cite{haltmeier2018sparsification}  we
introduced a  different  joint  $\ell^1$-minimization approach based on
Theorem~\ref{thm:laplace} that jointly recovers $\Source$ and $ c^2 \DDelta_{\rr} \Source$.

\subsection{Joint $\ell^1$-minimization framework}

The modified data $\ppartial_t^2 \Data$ is  well suited
to recover  singularities of   $\Source$, but hardly contains low-frequency
components of $\Source$. On the other hand, the low frequency
information is contained in the original data, which is still available to us.
This motivates the following joint $\ell^1$-minimization problem
\begin{equation} \label{eq:joint2}
\begin{aligned}
&\min_{(\Source, \Lsource)}  \norm{\Lsource}_1  +  I_{C} (\Source)  \\
&\text{such that }
\begin{bmatrix}  \Fullop\Source,  \Fullop\Lsource ,  \DDelta_{\rr} \Source  -    \Lsource/c^{2} \end{bmatrix} =
\begin{bmatrix} \Data , \ppartial_t^2 \Data ,0 \end{bmatrix}   \,.
\end{aligned}
\end{equation}
Here $I_C $  is the indicator function of
$ C  \triangleq [0, \infty)^n$, defined by
$I_C(\Source) = 0 $ if  $\Source \in C$ and $I_C(\Source) =\infty $ otherwise, and
guarantees non-negativity.

\begin{theorem} \label{thm:recovery}
Assume that $\Source \in \R^n$  is non-negative,
that   the measurement matrix $\Fullop$ and the modified PA source
$\Lsource =  c^2 \DDelta_{\rr} \Source$ satisfy  Equations \eqref{eq:ssc-1}, \eqref{eq:ssc-2},
and denote $\Data = \Fullop\Source$.
Then, the pair $[\Source, c^2 \DDelta_{\rr} \Source]$ can be recovered as the
unique solution of the joint  $\ell^1$-minimization problem~\eqref{eq:joint2}.
\end{theorem}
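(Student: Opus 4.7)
The plan is to reduce the joint minimization to the single-unknown $\ell^1$-problem already handled by Theorem \ref{thm:ell1}, and then recover $\Source$ through the coupling constraint.

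First I would verify feasibility of the candidate pair $[\Source, c^2 \DDelta_\rr \Source]$. The block $\Fullop \Source = \Data$ holds by the definition of $\Data$; the block $\Fullop (c^2 \DDelta_\rr \Source) = \ppartial_t^2 \Data$ is exactly the identity supplied by Theorem \ref{thm:laplace}; the coupling $\DDelta_\rr \Source - (c^2 \DDelta_\rr \Source)/c^2 = 0$ is trivial; and $I_C(\Source) = 0$ because $\Source$ is assumed non-negative. Hence the objective value at the candidate equals $\|c^2 \DDelta_\rr \Source\|_1$.

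Next, for any feasible pair $(\tilde{\Source}, \tilde{\Lsource})$ the constraint $\Fullop \tilde{\Lsource} = \ppartial_t^2 \Data$ means that $\tilde{\Lsource}$ is admissible for the auxiliary single $\ell^1$ problem $\min \|\hh\|_1$ subject to $\Fullop \hh = \ppartial_t^2 \Data$. Because \eqref{eq:ssc-1}, \eqref{eq:ssc-2} are assumed for $\Lsource = c^2 \DDelta_\rr \Source$ (together, implicitly, with the injectivity of $\Fullop$ on the span of $\{e_i : i \in \supp(\Lsource)\}$ required by Theorem \ref{thm:ell1}), the final conclusion of Theorem \ref{thm:ell1} tells us that $c^2 \DDelta_\rr \Source$ is the unique $\ell^1$-minimizer of this auxiliary problem. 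Therefore $\|\tilde{\Lsource}\|_1 \geq \|c^2 \DDelta_\rr \Source\|_1$ with equality if and only if $\tilde{\Lsource} = c^2 \DDelta_\rr \Source$. Adding $I_C(\tilde{\Source}) \geq 0$ shows that the candidate pair is indeed a minimizer and that every minimizer must satisfy $\tilde{\Lsource} = c^2 \DDelta_\rr \Source$.

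Finally, uniqueness of the $\Source$-component follows from the remaining constraint $\DDelta_\rr \tilde{\Source} = \tilde{\Lsource}/c^2 = \DDelta_\rr \Source$: the discrete Laplacian with zero boundary conditions is invertible---the same property implicitly used after \eqref{eq:L1exact} when solving the associated Poisson equation---so $\tilde{\Source} = \Source$. I do not foresee a genuine obstacle here; the only nontrivial ingredient is the unique $\ell^1$-recovery provided by Theorem \ref{thm:ell1}, whose hypotheses are precisely what is assumed, and every other step is a purely linear-algebraic consequence of the block constraints. The only point worth making explicit is that, unlike in the sequential two-stage procedure, the low-frequency information in the original data block $\Fullop \Source = \Data$ is actually redundant for the above identifiability argument once $\Lsource$ has been pinned down---it becomes essential only when passing from exact recovery to the noisy/regularized setting.
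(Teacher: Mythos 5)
Your argument is correct, and it is essentially the only natural route: the paper itself gives no proof of Theorem~\ref{thm:recovery} (it defers entirely to \cite{haltmeier2018sparsification}), and your reduction---feasibility of the candidate pair via Theorem~\ref{thm:laplace}, unique $\ell^1$-recovery of $\Lsource$ from the block $\Fullop \Lsource = \ppartial_t^2 \Data$ via the last assertion of Theorem~\ref{thm:ell1}, and then uniqueness of the $\Source$-component from the invertibility of the discrete Dirichlet Laplacian---is the standard argument and the one the cited reference carries out. You are also right to flag explicitly that the injectivity of $\Fullop$ on $\operatorname{span}\set{e_i \mid i \in \supp(\Lsource)}$ must be counted among the hypotheses for Theorem~\ref{thm:ell1} to yield uniqueness; the statement of Theorem~\ref{thm:recovery} as printed omits it, so your parenthetical is a genuine (minor) correction rather than pedantry.
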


\begin{proof}
See \cite{haltmeier2018sparsification}.
\end{proof}

In the case the data is only approximately sparse or noisy, we propose, instead of
\eqref{eq:joint2}, to solve the $\ell^2$-relaxed version
\begin{multline} \label{eq:joint-pen}
\frac{1}{2} \norm{\Fullop \Source- \Data}_2^2
+
\frac{1}{2} \norm{\Fullop\Lsource - \ppartial_t^2 \Data}_2^2
+
\frac{\alpha}{2} \norm{\DDelta_{\rr} \Source  -    \Lsource/c^{2}}_2^2
\\ +
 \beta \norm{\Lsource}_1  +  I_{C} (\Source)
\to \min_{(\Source, \Lsource)} \,.
\end{multline}
Here $\alpha>0$ is a tuning and  $\beta >0$ a regularization
parameter.

\subsection{Numerical minimization}

We will solve~\eqref{eq:joint-pen} using a proximal forward-backward splitting method~\cite{combettes2011proximal}, which is well suited for minimizing
the sum  of a smooth and a non-smooth but convex part.
In the case of  \eqref{eq:joint-pen}  we take the smooth part as
\begin{multline}
\Phi(\Source,\Lsource) \triangleq  \frac{1}{2} \norm{\Fullop \Source- \Data}_2^2
 \\ +
\frac{1}{2} \norm{\Fullop\Lsource - \ppartial_t^2\Data}_2^2
 +
\frac{\alpha}{2} \norm{\DDelta_{\rr} \Source  -    \Lsource/c^{2}}_2^2
\end{multline}
and the  non-smooth part as $\Psi(f,h) \triangleq \beta \norm{\Lsource}_1  +  I_{C}(f)$.

The proximal gradient  algorithm then alternately  performs an explicit gradient step
for $\Phi$ and an implicit proximal step for $\Psi$. For the proximal step, the proximity operator
of a function must be computed. The proximity operator of a given convex function $F\colon \R^n \to\R$ is defined by~\cite{combettes2011proximal}
	\[\prox_{F}(\Source) \triangleq
	\operatorname{argmin}
	\set{ F(\hh)+\tfrac{1}{2} \| \Source - \hh \|_2^2 \mid \hh \in\R^n }  \,.\]
The regularizers we are considering here have the advantage, that their proximity operators can be computed explicitly and do not cause a significant computational overhead.
The gradient $[\nabla_\Source \Phi, \nabla_\Lsource \Phi]$ of the  smooth part can
easily be computed  to be
\begin{align*}
\nabla_\Source \Phi (\Source,\Lsource)
&=  \Fullop^* (\Fullop \Source- \Data)-  \alpha \DDelta_{\rr} (\DDelta_{\rr} \Source  -    \Lsource/c^{2})\\
\nabla_\Lsource \Phi (\Source,\Lsource)
&=   \Fullop^* (\Fullop \Lsource- \ppartial_t^2\Data)- \frac{\alpha }{c^2}(\DDelta_{\rr} \Source  -    \Lsource/c^{2}) \,.
\end{align*}
The proximal operator of the non-smooth part is given by
\begin{align*}
\prox(\Source,\Lsource) &:= [\prox_{I_C}(\Source), \prox_{\beta\|\cdot\|_1(\Lsource)}] \,,\\
\prox_{I_C}(\Source)_i &= (\max(\Source_i,0))_i \,,\\
\prox_{\beta\|\cdot\|_1}(\Lsource)_i  &= (\max(|\Lsource_i|-\beta,0)\,\sign(\Lsource_i))_i
\end{align*}
With this, the proximal gradient algorithm is given by
\begin{align} \label{eq:prox1}
\Source^{k+1} &= \prox_{I_C}\left(\Source^k - \mu  \nabla_\Source  \Phi (\Source^k, \Lsource^k)  \right)
\\ \label{eq:prox2}
\Lsource^{k+1} &= \prox_{\mu\beta\|\cdot\|_1}\left(\Lsource^k - \mu  \nabla_\Lsource \Phi (\Source^k, \Lsource^k)\right),
\end{align}
where $(\Source^k, \Lsource^k)$ is the $k$-th iterate and $\mu$ the step size.
We initialize the proximal gradient algorithm with $\Source^0=\Lsource^0=0$.

\section{Deep learning for CS PAT}
\label{sec:deep}

As an alternative to the joint $\ell^1$-minimization algorithm
we use deep learning  or CS image reconstruction. 
We thereby use a trained residual  network as well as a corresponding 
(approximate) nullspace network, which offers improved data consistence.

\subsection{Image reconstruction by deep learning}
\label{sec:nn}

Deep learning is a recent paradigm to solve inverse problems of  the form
\eqref{eq:ip}.  In this case, image reconstruction is  performed by an explicit reconstruction
function
\begin{equation} \label{eq:nn}
	 \rec_{\theta}  =  \NN_\theta  \circ  \Fullop^\sharp \colon \R^{mQ} \to \R^n \,.
\end{equation}
The reconstruction operator $\rec_{\theta}$ is the  composition of a  backprojection
operator and  a convolutional neural network
\begin{align}
	   \Fullop^\sharp \colon \R^{mQ} \to \R^n \\
	   \NN_\theta \colon \R^{n} \to \R^n \,.
\end{align}
The  backprojection $\Fullop^\sharp$ performs an  initial reconstruction that is
subsequently  improved by the CNN $\NN_\theta$.
In this work, we use the filtered backprojection  (FBP) algorithm \cite{FinHalRak07}  for
$\Fullop^\sharp$,  which is a discretization of the inversion formula \eqref{eq:fbp2d}.
For the CNN $\NN_\theta$ we  use the  residual network
(see Subsection~\ref{sec:resnet})   and the nullspace network (see Subsection~\ref{sec:nullnet}).

The CNN   is taken from a parameterized family, where parameterization
$ \theta \in \Theta \mapsto \NN_\theta $ is determined by the network architecture.
For adjusting  the parameters, one assumes a family of training
data $ ((\Xin_k, \Xout_k))_{k=1}^N$ is  given where any
training example consist of artifact-free output image $\Xout_k$
and a corresponding input image $ \Xin_k = \Fullop^\sharp \Fullop (\Xout_k) $. The free parameters
$\theta$  are chosen in such a way, that the overall error
of the network for predicting $\Xout_k$ from  $\Xin_k$ is minimized.
The minimization procedure used  in this paper is described in
Subsection \eqref{sec:training}.

\subsection{Residual network}
\label{sec:resnet}

The architecture of the CNN is a  crucial step
for the performance of tomographic image reconstruction  with deep learning.
A  common  architecture in that context  is the following residual network
\begin{equation}\label{eq:resnet}
\rec_{\theta}^{\rm res}  =  (\Id + \UU_\theta) \Fullop^\sharp  \,,
\end{equation}
where $ \UU_\theta$  is the Unet, originally   introduced in
\cite{ronneberger2015unet} for biomedical image segmentation. The
residual network \ref{eq:resnet} has successfully  been  used for various tomographic
image reconstruction tasks \cite{antholzer2018deep,jin2016deep,han2016deep}
including PAT.

\begin{figure}[htb!]
\centering
   \includegraphics[width=\columnwidth]{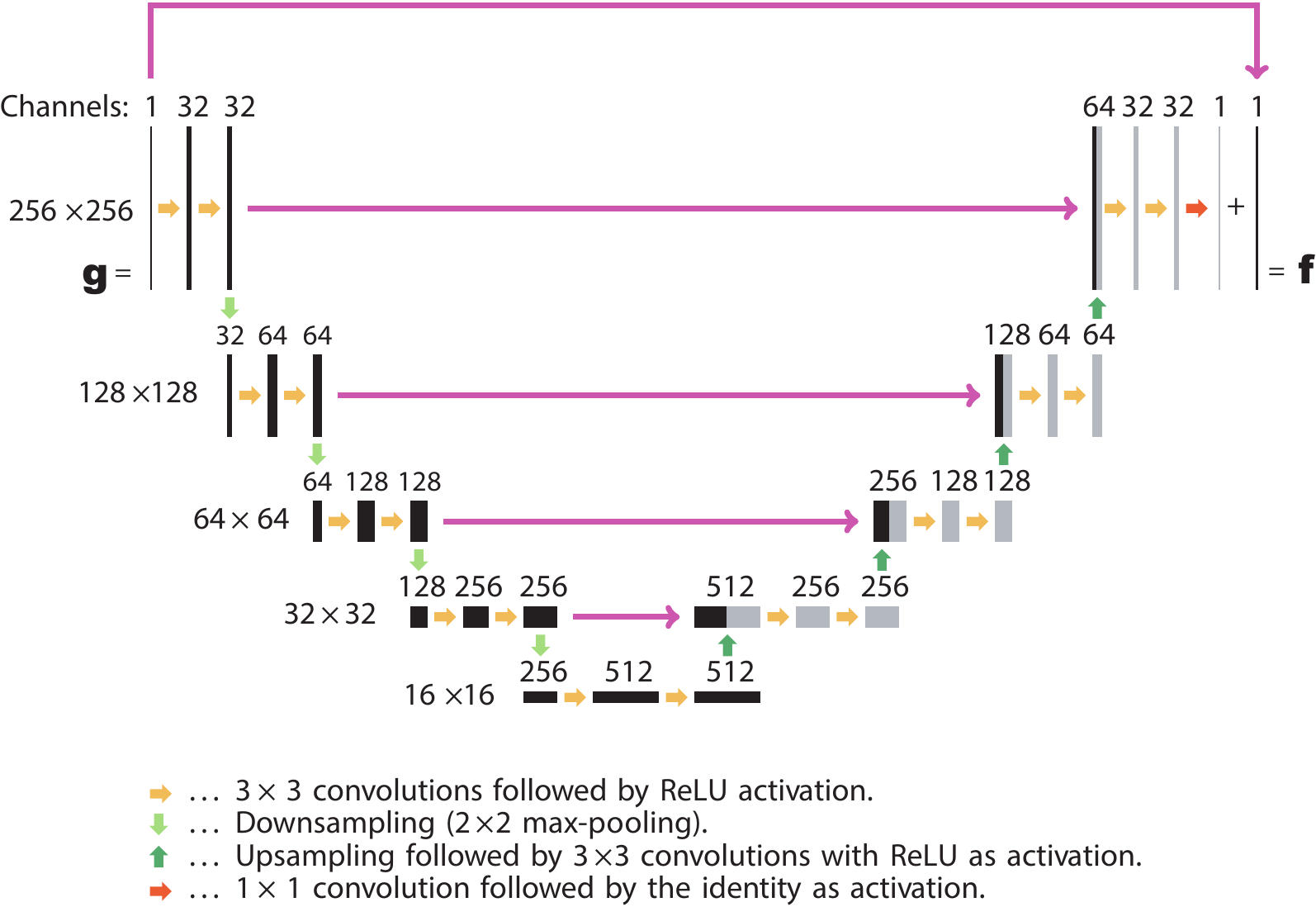}
\caption{ Architecture of the residual network $\Id + \UU_\theta$. \label{fig:net}
The number written above each layer denotes the number of convolution kernels (channels).
The numbers written on the  left are the image sizes.
The long  arrows indicate direct connections with subsequent concatenation or summation.}
\end{figure}

Using  $\Id + \UU_\theta$ instead of  $ \UU_\theta$  affects that actually
the residual images  $\Xout  + \Xin  $ are  learned by the Unet. The residual images
often have a simpler structure  than the original outputs $\Xout $.
As argued in~\cite{han2016deep}, learning the residuals and adding them to the inputs after the last layer is more effective than directly training  for the outputs.
The resulting deep neural network architecture is shown in Figure~\ref{fig:net}.

\subsection{Nullspace network}
\label{sec:nullnet}

Especially when applying $\rec_{\theta}^{\rm res}$ to objects very different from the training set,
the residual network  \eqref{eq:resnet} lacks data consistency, in the sense that
$\rec_{\theta}^{\rm res} \Data$ is  not necessarily   a solution of the given equation
$\Fullop \Source = \Data$.
To overcome this limitation, as an alternative we use the nullspace network
\cite{schwab2018deep},
\begin{equation}\label{eq:nullnet}
    \rec_{\theta}^{\rm null}  =  (\Id + \Po_{ \Kern (\Fullop)} \UU_\theta ) \Fullop^\sharp   \,.
\end{equation}
One strength  of the nullspace network is that the term
 $\Po_{ \Kern (\Fullop)} \UU_\theta$ only adds information  that is
 consistent with the given data.  For example, if $\Fullop^\sharp = \Fullop^+$ equals  the
 pseudoinverse, then $\rec_{\theta}^{\rm null} \Data$  even
 is fully data consistent as implied by the following theorem.

\begin{theorem} \label{thm:null}
Let  $\Data =  \Fullop(\Source^\star)$ be in the range of the forward operator, write  $L(\Fullop, \Data)$  for the set of solutions of the equation
$\Fullop \Source =\Data$ and take $\Fullop^\sharp = \Fullop^+$ as the
 pseudoinverse.
\begin{enumerate}
\item  $ \rec_{\theta}^{\rm null} (\Data) $ is a solution of  $\Fullop \Source = \Data$.
\item We have $ \rec_{\theta}^{\rm null}(\Data)  =  \Po_{ L(\Fullop, \Data)} \rec_{\theta}^{\rm res}(\Data)$.
\item Consider the iteration
\begin{align} \label{eq:iter1}
\Source^{(0)}  &=  \rec_{\theta}^{\rm res}(\Data)
\\ \label{eq:iter2}
\Source^{(k+1)}  &=  \Source^{(k)} - s\Fullop^\trans ( \Fullop  \Source^{(k)} - \Data ) \,,
\end{align}
with step size  $0 <  s  < \snorm{\Fullop}^{-2}$.
Then:
\begin{enumerate}
\item $\snorm{\Source^\star  -  \Source^{(k)}}$ is monotonically decreasing
\item $\lim_{k \to \infty}   \Source^{(k)} = \rec_{\theta}^{\rm null} (\Data)$
\item $\snorm{\Source^\star  -  \Source^{(k)}} \leq
\snorm{\Source^\star  - \rec_{\theta}^{\rm res}(\Data)}$.
\end{enumerate}
\end{enumerate}
\end{theorem}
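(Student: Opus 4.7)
The plan is to exploit the orthogonal decomposition $\R^n = \Kern(\Fullop) \oplus \Kern(\Fullop)^\perp$ together with the two pseudoinverse facts $\Fullop \Fullop^+ \Data = \Data$ (valid since $\Data \in \ran(\Fullop)$) and $\Fullop^+ \Data \in \Kern(\Fullop)^\perp$. Writing $\hh \triangleq \UU_\theta \Fullop^+ \Data$, I obtain the compact identities
\begin{align*}
\rec_{\theta}^{\rm res}(\Data) &= \Fullop^+ \Data + \hh, \\
\rec_{\theta}^{\rm null}(\Data) &= \Fullop^+ \Data + \Po_{\Kern(\Fullop)} \hh,
\end{align*}
which will drive all three parts of the argument.

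For (1), I apply $\Fullop$ to the second identity: the first summand yields $\Data$ and the second is annihilated because $\Po_{\Kern(\Fullop)} \hh \in \Kern(\Fullop)$. For (2), I observe that $L(\Fullop,\Data) = \Fullop^+ \Data + \Kern(\Fullop)$ is an affine subspace; orthogonal projection onto it amounts to subtracting the offset, projecting onto $\Kern(\Fullop)$, and adding the offset back. Applied to $\Fullop^+ \Data + \hh$ this returns $\Fullop^+ \Data + \Po_{\Kern(\Fullop)} \hh$, matching $\rec_{\theta}^{\rm null}(\Data)$.

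For (3), I set $T_s \triangleq \Id - s \Fullop^\trans \Fullop$. Using $\Fullop \Source^\star = \Data$, the recursion rewrites as $\Source^\star - \Source^{(k+1)} = T_s(\Source^\star - \Source^{(k)})$. Since $\Fullop^\trans \Fullop$ is symmetric positive semidefinite with spectrum in $[0,\snorm{\Fullop}^2]$, the condition $0 < s < \snorm{\Fullop}^{-2}$ yields $\snorm{T_s} \leq 1$, giving the monotone decrease (a) at once and, by induction, the bound (c) in the form $\snorm{\Source^\star - \Source^{(k)}} \leq \snorm{\Source^\star - \Source^{(0)}}$. For (b) I exploit that $T_s$ acts as the identity on $\Kern(\Fullop)$ (since $\Fullop^\trans \Fullop$ vanishes there) and as a strict contraction on $\Kern(\Fullop)^\perp = \ran(\Fullop^\trans)$. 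Hence $\Po_{\Kern(\Fullop)} \Source^{(k)}$ is frozen at $\Po_{\Kern(\Fullop)} \Source^{(0)} = \Po_{\Kern(\Fullop)} \hh$ (using $\Po_{\Kern(\Fullop)} \Fullop^+ \Data = 0$), while the perpendicular component performs a classical Landweber iteration on $\ran(\Fullop^\trans)$ for the consistent system $\Fullop \Source = \Data$, and therefore converges to the minimum-norm solution $\Fullop^+ \Data$. Summing the two limits gives exactly $\rec_{\theta}^{\rm null}(\Data)$.

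I expect the main obstacle to be bookkeeping rather than analysis: one must keep straight that the Unet output $\hh$ contributes only through its kernel component, and that $\Source^{(0)}$ decomposes cleanly because $\Fullop^+ \Data$ lies entirely in $\Kern(\Fullop)^\perp$. Beyond $\snorm{T_s}\leq 1$ and strict contractivity on $\ran(\Fullop^\trans)$, no further spectral work is required in this finite-dimensional setting, so (b) reduces to the standard convergence theorem for Landweber iteration.
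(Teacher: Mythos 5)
Your proof is correct and complete. Note that the paper itself does not prove Theorem~\ref{thm:null} (the proof is deferred ``elsewhere'', presumably to the cited null-space-learning reference), so there is no in-paper argument to compare against; your route via the identities $\rec_{\theta}^{\rm res}(\Data)=\Fullop^+\Data+\hh$ and $\rec_{\theta}^{\rm null}(\Data)=\Fullop^+\Data+\Po_{\Kern(\Fullop)}\hh$, the affine-projection formula for $L(\Fullop,\Data)=\Fullop^+\Data+\Kern(\Fullop)$, and the splitting of the Landweber operator $T_s=\Id-s\Fullop^\trans\Fullop$ into the identity on $\Kern(\Fullop)$ and a strict contraction on $\ran(\Fullop^\trans)=\Kern(\Fullop)^\perp$ is the natural one and all steps (including $\Po_{\Kern(\Fullop)}\Fullop^+\Data=0$ and $\Fullop\Fullop^+\Data=\Data$ for $\Data\in\ran(\Fullop)$) are justified. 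The only cosmetic remark is that ``monotonically decreasing'' in part (a) should be read as non-increasing, which is exactly what $\snorm{T_s}\leq 1$ delivers.
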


\begin{proof}
Will be presented elsewhere.
\end{proof}

Theorem~\ref{thm:null} implies that iteration \eqref{eq:iter1}, \eqref{eq:iter2}
defines   a sequence
\begin{equation} \label{eq:iter3}
	\rec_{\theta}^{\mathrm{null},(k)}(\Data) \triangleq \Source^{(k)}
\end{equation}
that monotonically converges to $\rec_{\theta}^{\rm null}(\Data) $.
It implies that the nullspace network as well as the approximate nullspace network
$\rec_{\theta}^{\mathrm{null},(k)}(\Data)$  have  a smaller reconstruction
error than the residual network.  Moreover, according to Theorem~\ref{thm:null}
the  nullspace network  yields a solution of the equation $\Fullop \Source =\Data$
even for elements very  different from the  training data.

\begin{figure}[htb!]
\includegraphics[width=0.45\columnwidth]{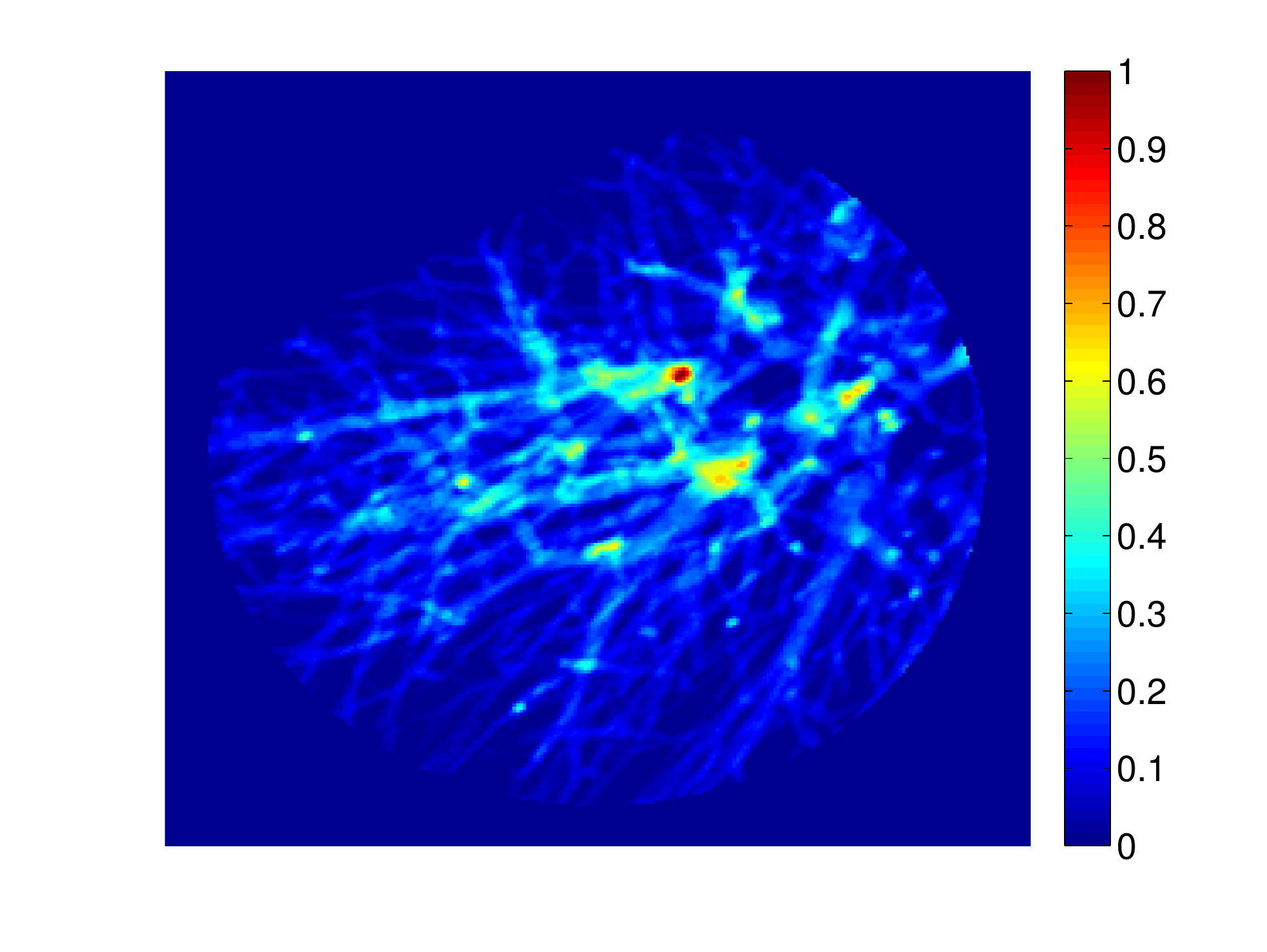}
\includegraphics[width=0.45\columnwidth]{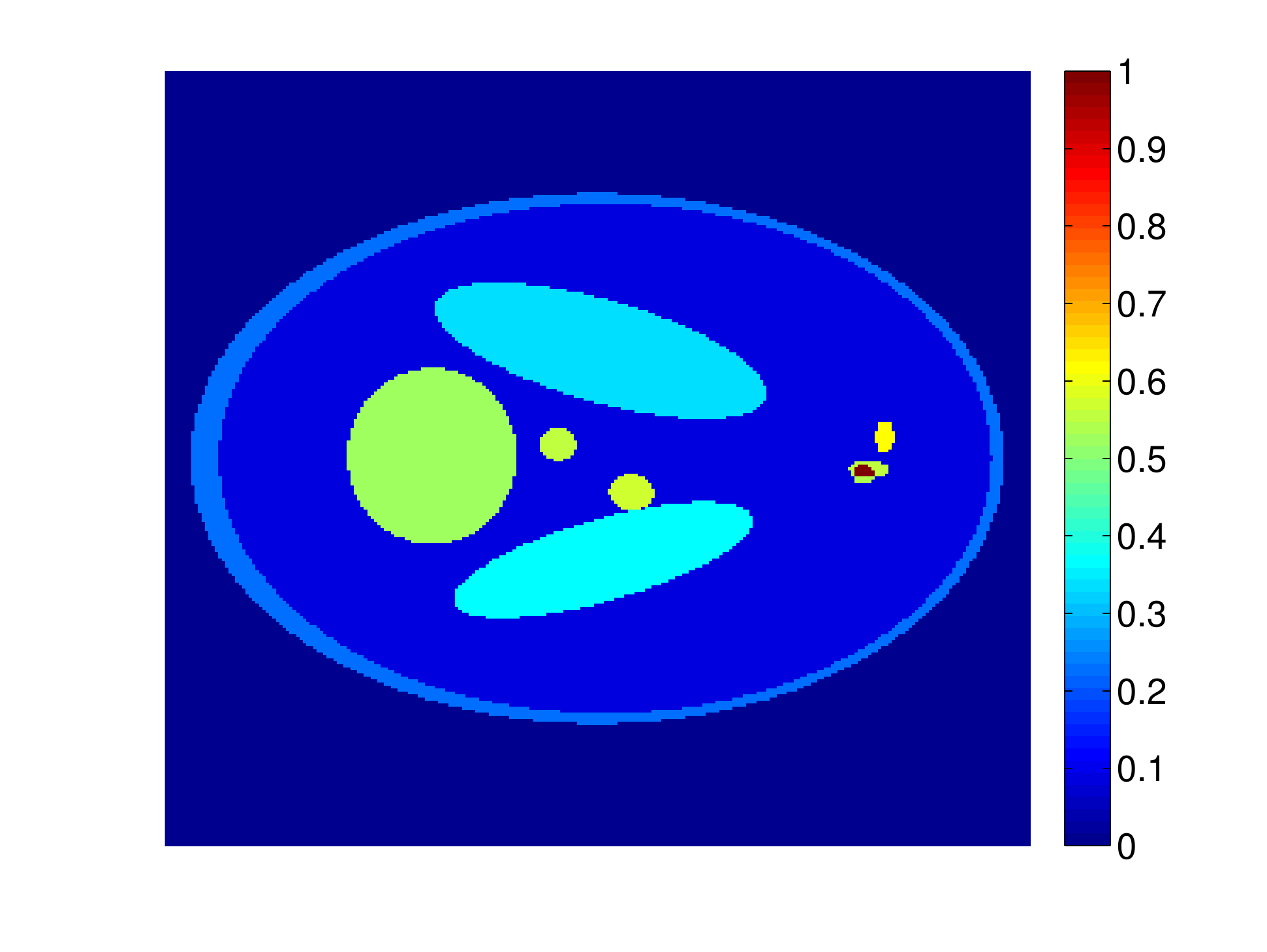}\\
\includegraphics[width=0.45\columnwidth]{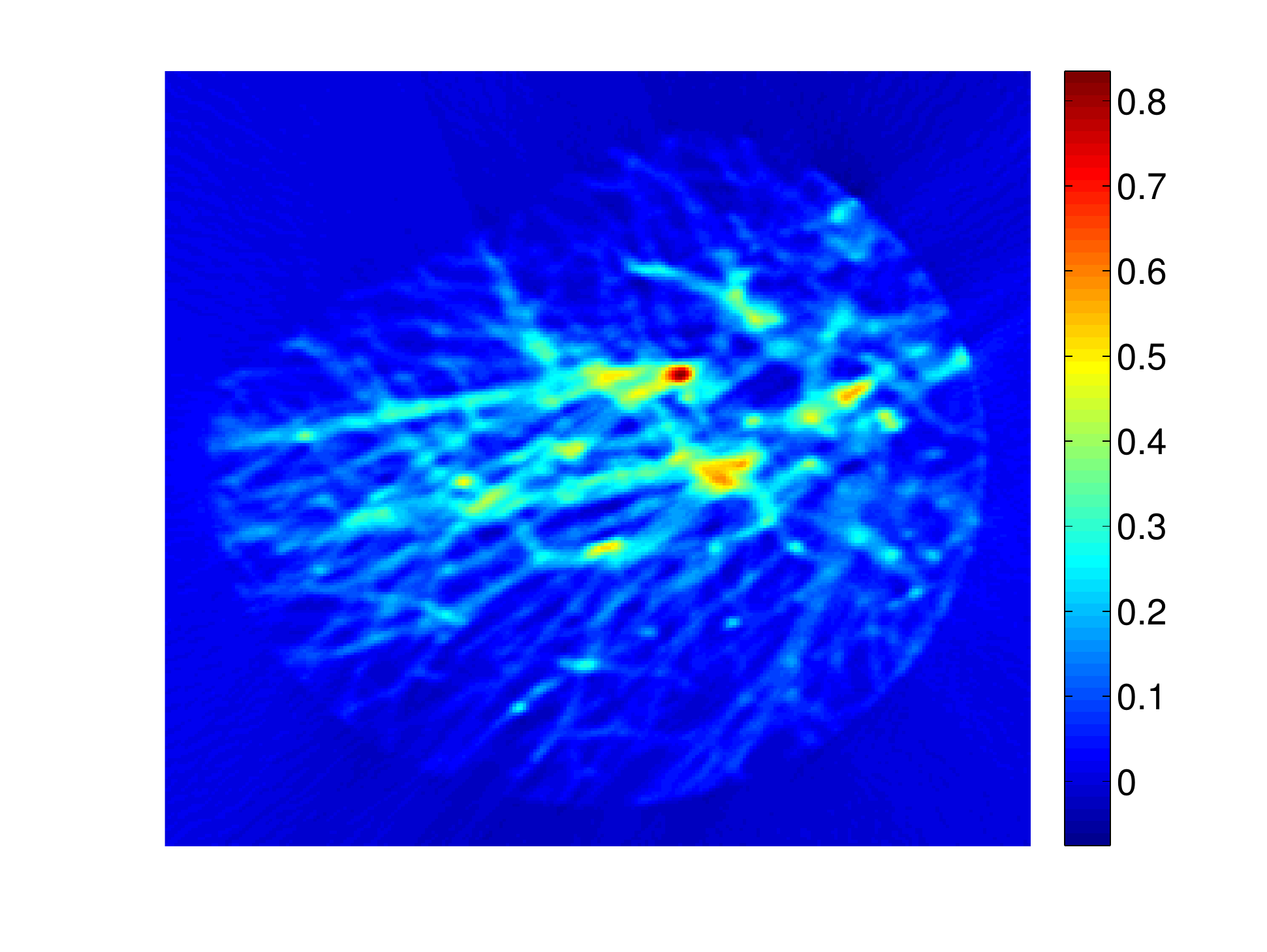}
\includegraphics[width=0.45\columnwidth]{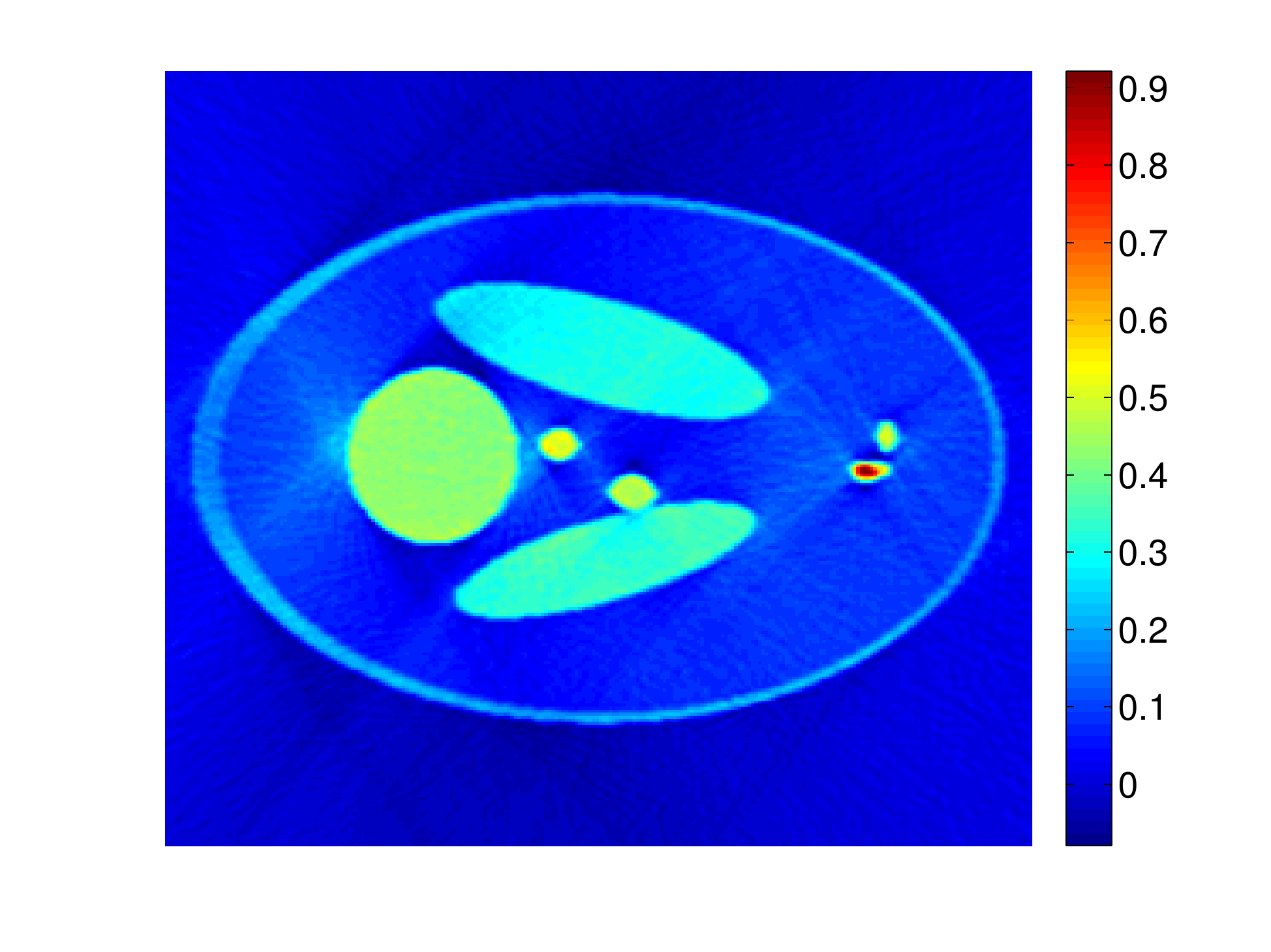}
\caption{Test phantoms for results presented below.
Top: vessel phantom (left) and  head phantom (right).
Bottom: FBP reconstruction from full data of vessel phantom (left)
and head phantom (right).\label{fig:phantoms}}
        \end{figure}

\section{Numerical results}
\label{sec:num}

In this section we numerically compare the joint $\ell^1$-minimization  approach
with the residual network and the nullspace network.  We also compare the results with plain
FBP.  We use Keras~\cite{keras} with TensorFlow~\cite{tensorflow}
to train and evaluate the CNN. The FBP, the  $\ell^1$-minimization  algorithm and the
iterative update \eqref{eq:iter2} is  implemented  in MATLAB.
We ran all our experiments on a computer using an
Intel i7-6850K and an NVIDIA 1080Ti.   The phantoms as well as the FBP
reconstruction from fully sampled data  are shown in
Figure~\ref{fig:phantoms}. Note that we use limited view data which implies that
the reconstructions in Figure~\ref{fig:phantoms} contain some artefacts.

\subsection{Measurement setup}
\label{eq:setup}

The entries of any  discrete  PA source  $\Source \in \R^{n}$ with
$n= 256^2$ correspond to  discrete samples  of the continuous source at a
$256 \times   256$ Cartesian grid covering the square $[\SI{-5}{\micro \meter} , \SI{9}{\micro\meter}] \times
[ \SI{-12.5}{\micro \meter}, \SI{1.5}{\micro \meter}]$.
The full wave data $\Data  \in \R^{ M P}$ corresponds to $P= 747$ equidistant
temporal samples in $[0,T]$ with $T=\SI{4.9749d-2}{\micro \second} $ and $M=240 $ equidistant sensor locations on the circle of radius $\SI{40}{\micro \meter}$ and polar angles in the interval  $[\SI{35}{\degree}, \SI{324}{\degree}] $. The sound speed is taken as   $c= \SI{1.4907d3}{\meter \per \second}$.
 The wave equation is evaluated by  discretizing the solution formula of the  wave equation, and the
inversion formula \eqref{eq:fbp2d} is discretized using the     standard  FBP procedure  described in \cite{FinHalRak07,haltmeier2011mollification}. Recall  that the continuous setting the inversion integral in \eqref{eq:fbp2d} equals the adjoint of the forward operator. Therefore the above procedure  gives a  pair
\begin{align*}
\Wave \colon \R^{n} \to \R^{mQ} \\
\BP \colon \R^{nQ} \to \R^{n}
\end{align*}
of  forward operator and unmatched adjoint.

We consider $m = 60$ spatial measurements which corresponds to a
compression  factor of four. For the sampling matrices $\samp \in \R^{m \times M}$ we use
the following instances:
\begin{itemize}
\item Deterministic sparse  subsampling matrix
with
entries
\begin{equation} \label{eq:subsampling}
\samp[i,j]
=
\begin{cases}
2 & \text{ if } j = 4(i-1) + 1 \\
0 & \text{ otherwise} \,.
\end{cases}
\end{equation}
\item Random Bernoulli matrix  where each entry is taken
independently as  $\pm 1/\sqrt{m}$  with equal  probability.
\end{itemize}

The Bernoulli matrix  satisfies then RIP  with high probability,
whereas the sparse subsampling matrix doesn't. Therefore, we  expect the
$\ell^1$-minimization  approach to work better for  Bernoulli measurements.
On the other hand,  in the subsampling case the  artefacts  have more structure
which therefore is expected  to be better   for the  deep  learning approaches.
Our findings below confirm these conjectures.

\subsection{Construction of reconstruction networks}
\label{sec:training}

For the residual and the nullspace network
we use the backprojection layer
\begin{equation}\label{eq:bo}
\Fullop^\sharp =  \BP  \circ   \Samp^\trans \,,
\end{equation}
and the  same trained CNN.
For that purpose, we construct $N = 5000$ training examples
$ (\Xin_k,\Xout_k)_{k=1}^N$ where $\Xout_k$ are taken as projection
images from three dimensional lung blood vessel data  as described in \cite{schwab2018fast}.
All images  $\Xout_k$  are  normalized to have maximal intensity one.
The corresponding input images
are computed by    $\Xin_k  =   \Fullop^\sharp \Fullop    \Xout_k $.
The CNN is constructed by minimizing the  mean absolute error
\begin{equation} \label{eq:err-nn}
	E_N(\theta)
	\triangleq
\frac{1} {N}
\sum_{k=1}^N \norm{ ( \Id + \UU_\theta) (\Xin_k) - \Xout_k }_1
\end{equation}
using  stochastic  gradient descent   with  batch size 1 and momentum 0.9. We trained for 200 epochs
and used a  decaying learning parameter between  $0.005$ to $0.0025$.

Having computed the minimizer of \eqref{eq:err-nn} we use the trained residual network
$\rec_{\theta}^{\rm res}$ as well as the corresponding approximate nullspace network
$\rec_{\theta}^{\mathrm{null}, (10)}$ for image reconstruction.

\subsection{Blood vessel phantoms}

First we investigate the performance on  50 blood vessel phantoms
that are not contained in the training set. We consider sparse sampling
as well as Bernoulli measurements.  For the joint recovery approach,
 we use 70 iterations of the iterative thresholding procedure with coupling parameter
 $\al = 0.001$,  regularization parameter $\beta = 0.005$ and step size  $\mu = 0.125$.
 For the (approximate) nullspace network
$\rec_{\theta}^{\mathrm{null}, (10)}$  we use  10 iterations to approximately compute the projection.
Results for one of the vessel phantoms are visualized  in Figure \ref{fig:vessel}.
To quantitatively evaluate the results
 we computed the MSE (mean square error), the PSNR (peak signal to noise ratio)  and the SSIM (structural similarity index \cite{wang2004image}) averaged over all  50 blood vessel phantoms.
 The reconstruction errors are summarized in
 Table~\ref{tab:vessel} where  the best results are framed.

\begin{figure}[htb!]
\includegraphics[width=0.45\columnwidth]{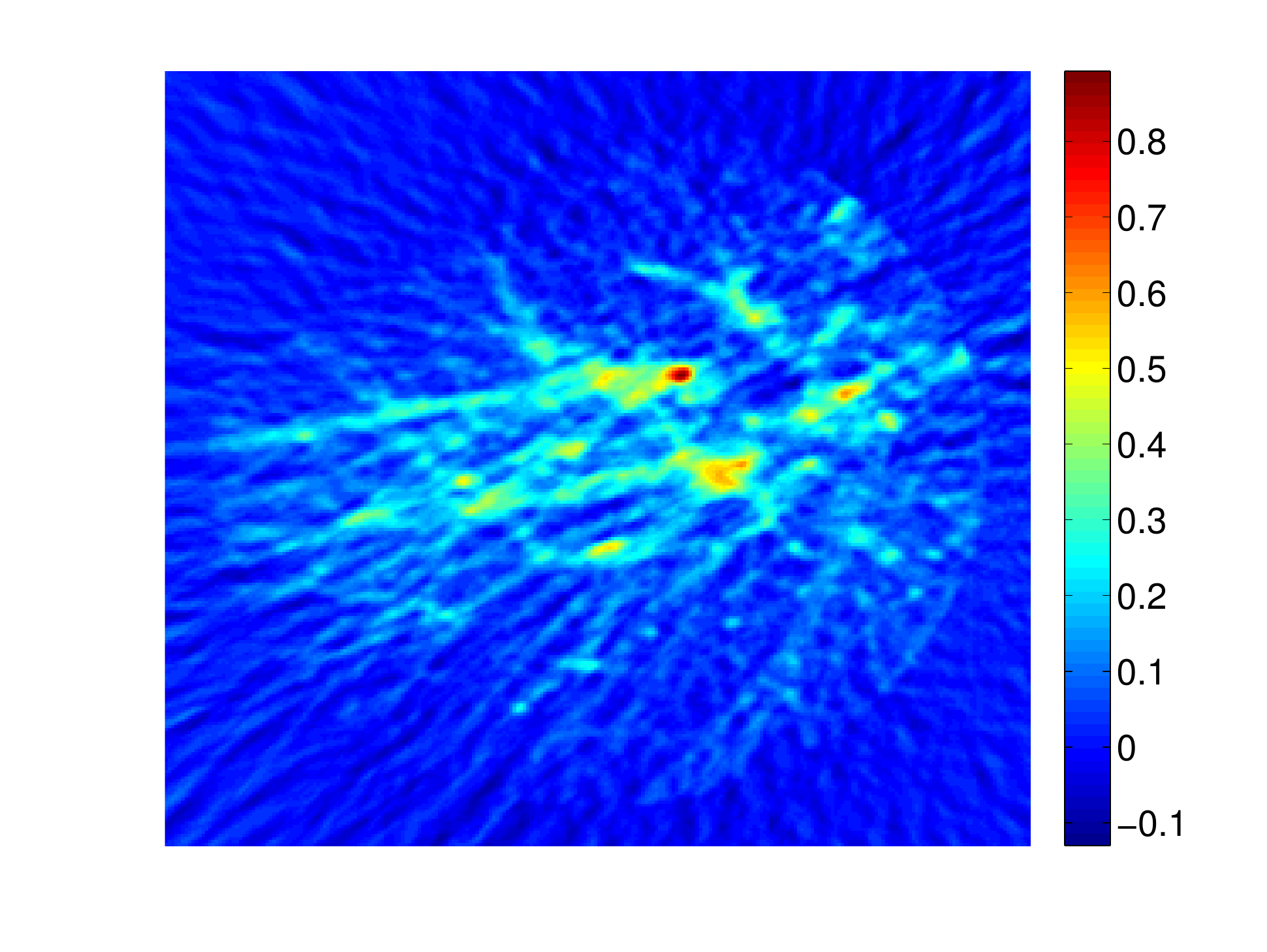}
\includegraphics[width=0.45\columnwidth]{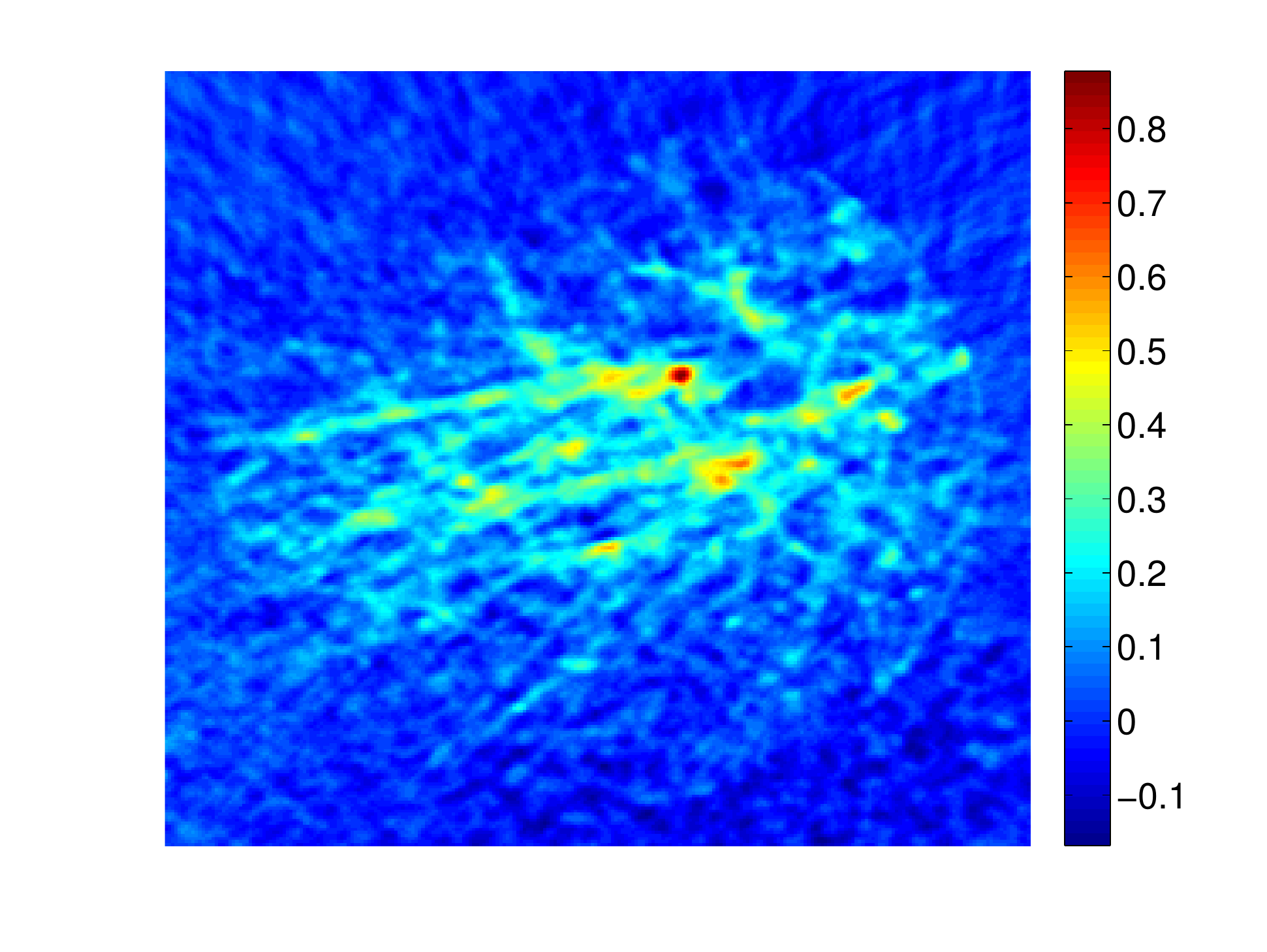}\\
\includegraphics[width=0.45\columnwidth]{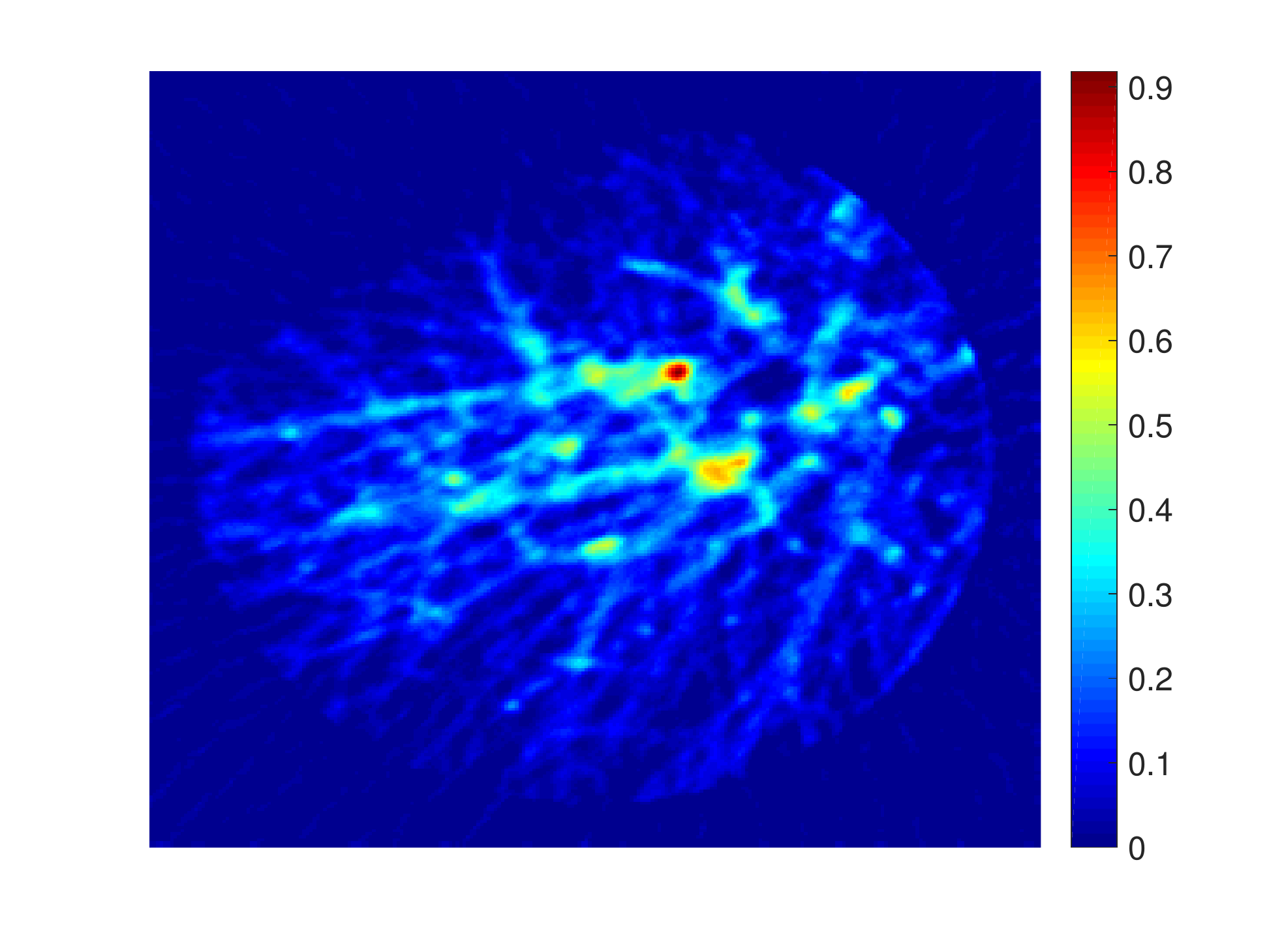}
\includegraphics[width=0.45\columnwidth]{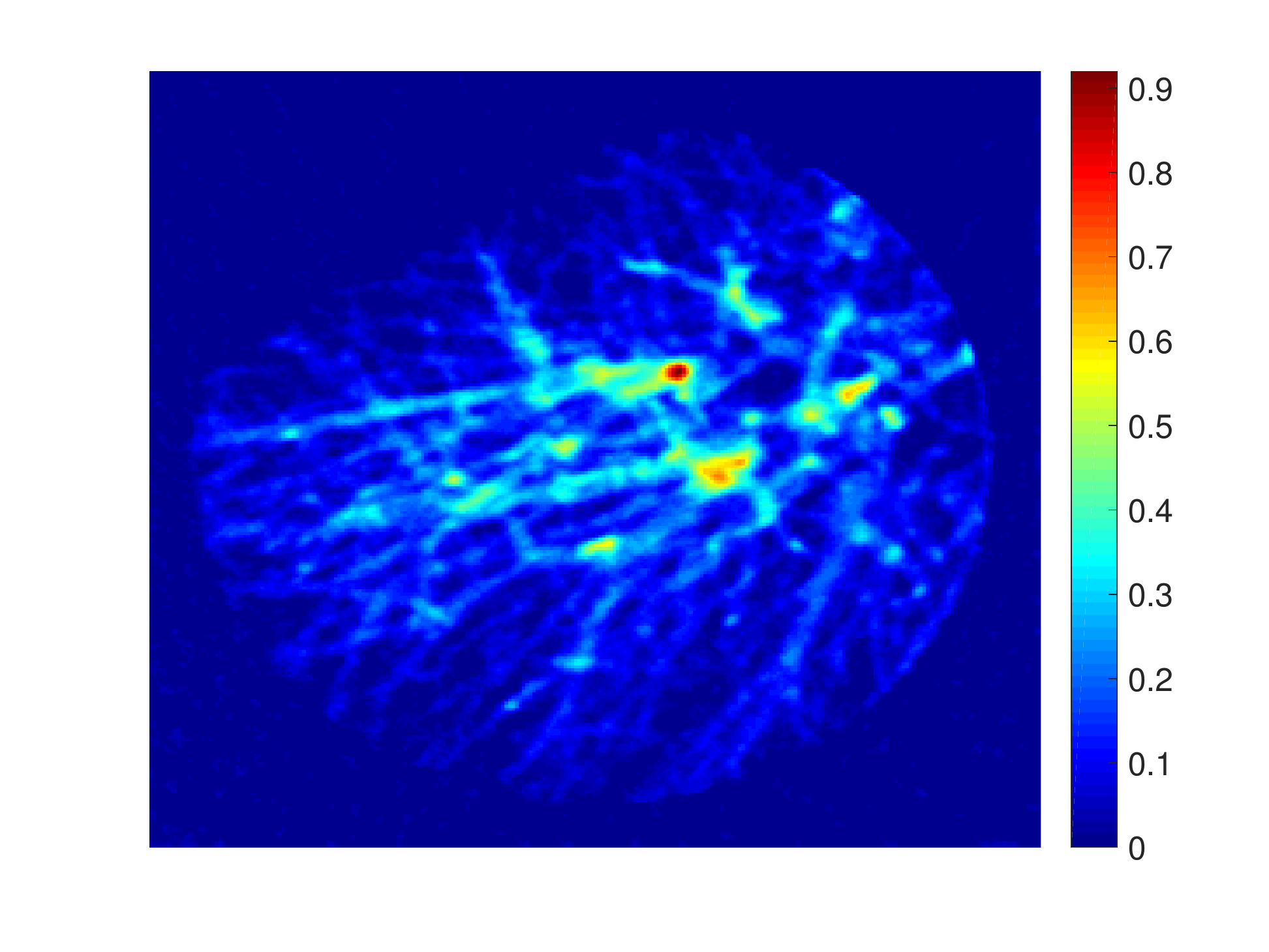}\\
\includegraphics[width=0.45\columnwidth]{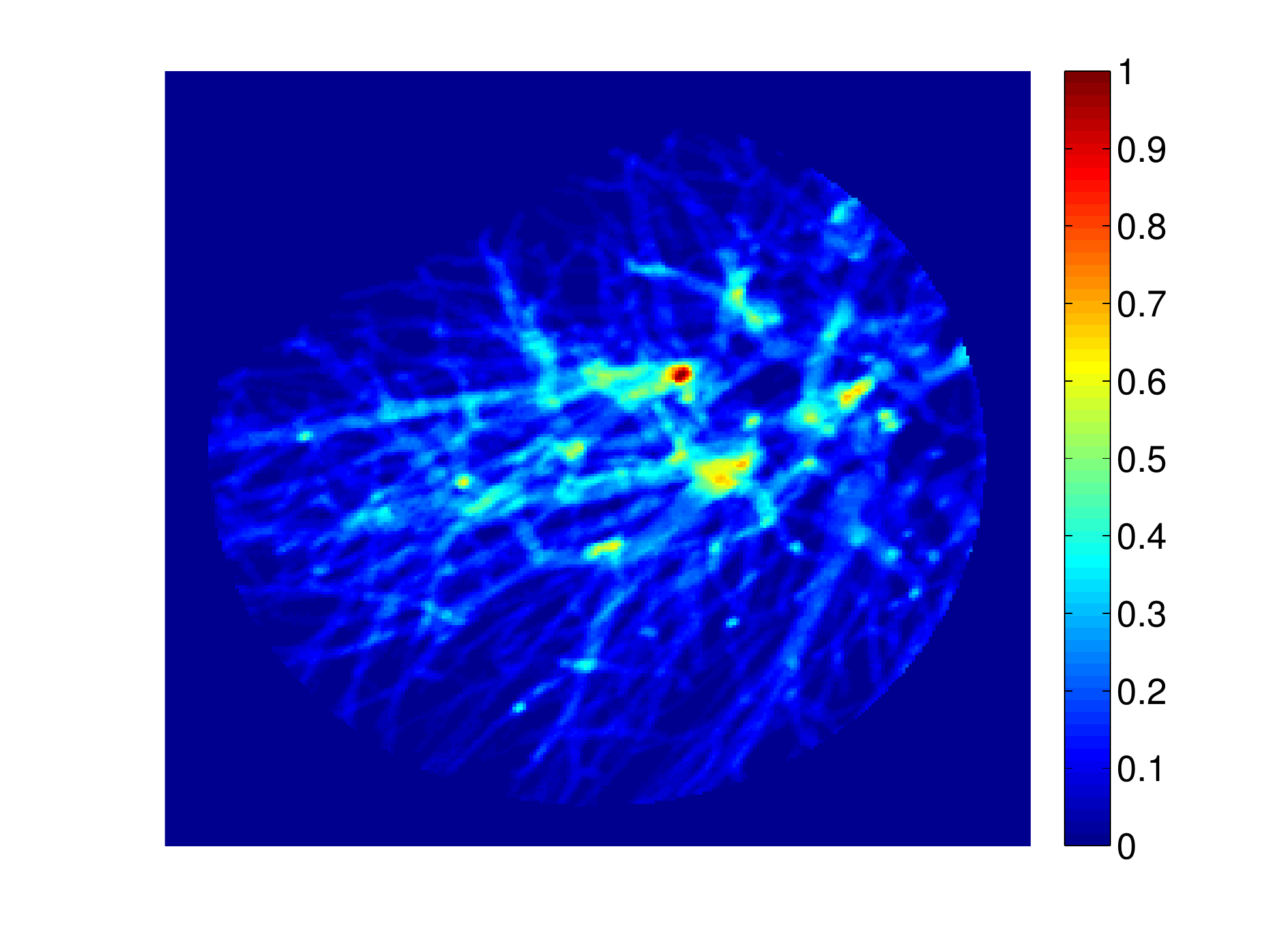}
\includegraphics[width=0.45\columnwidth]{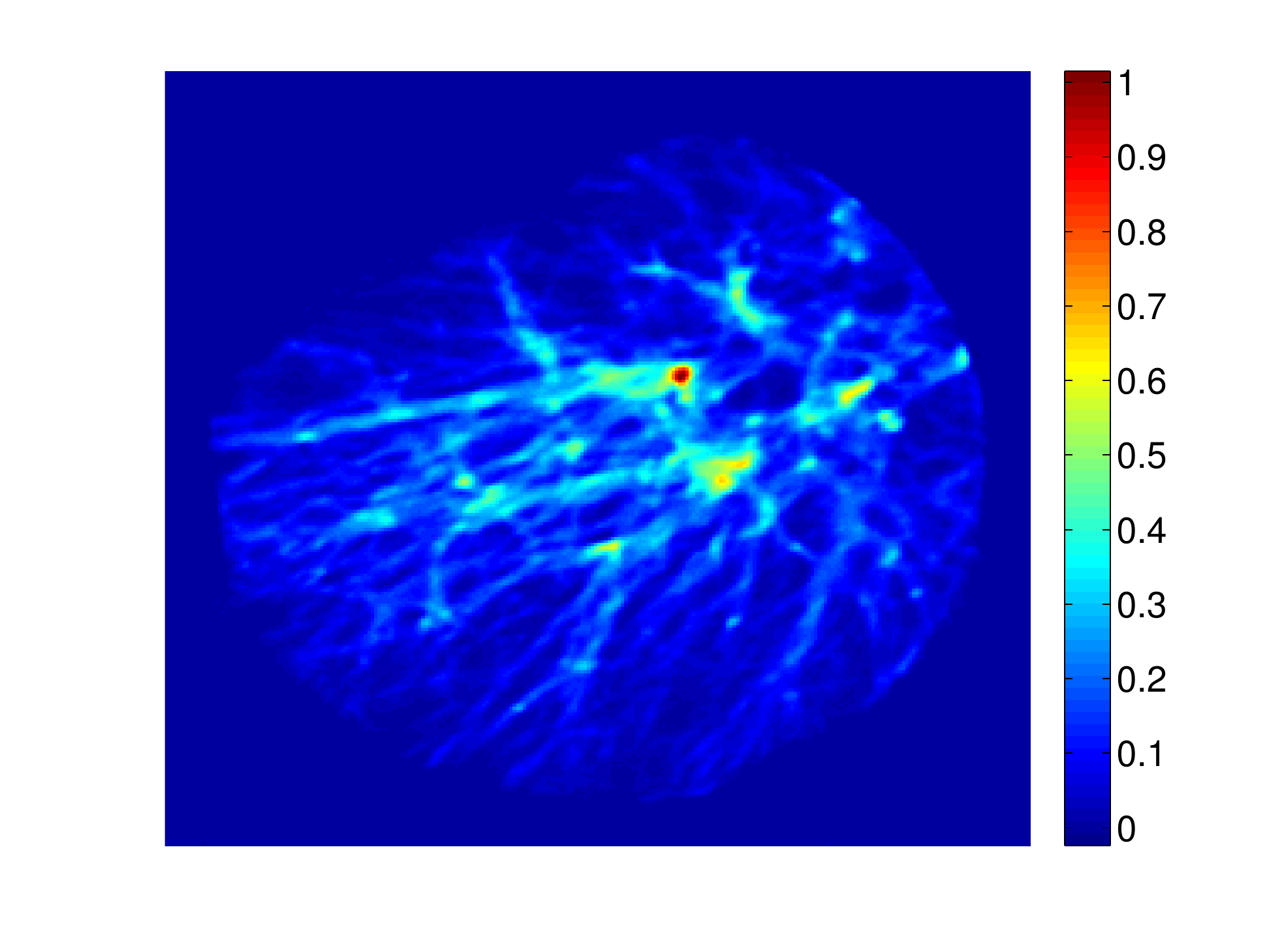}\\
\includegraphics[width=0.45\columnwidth]{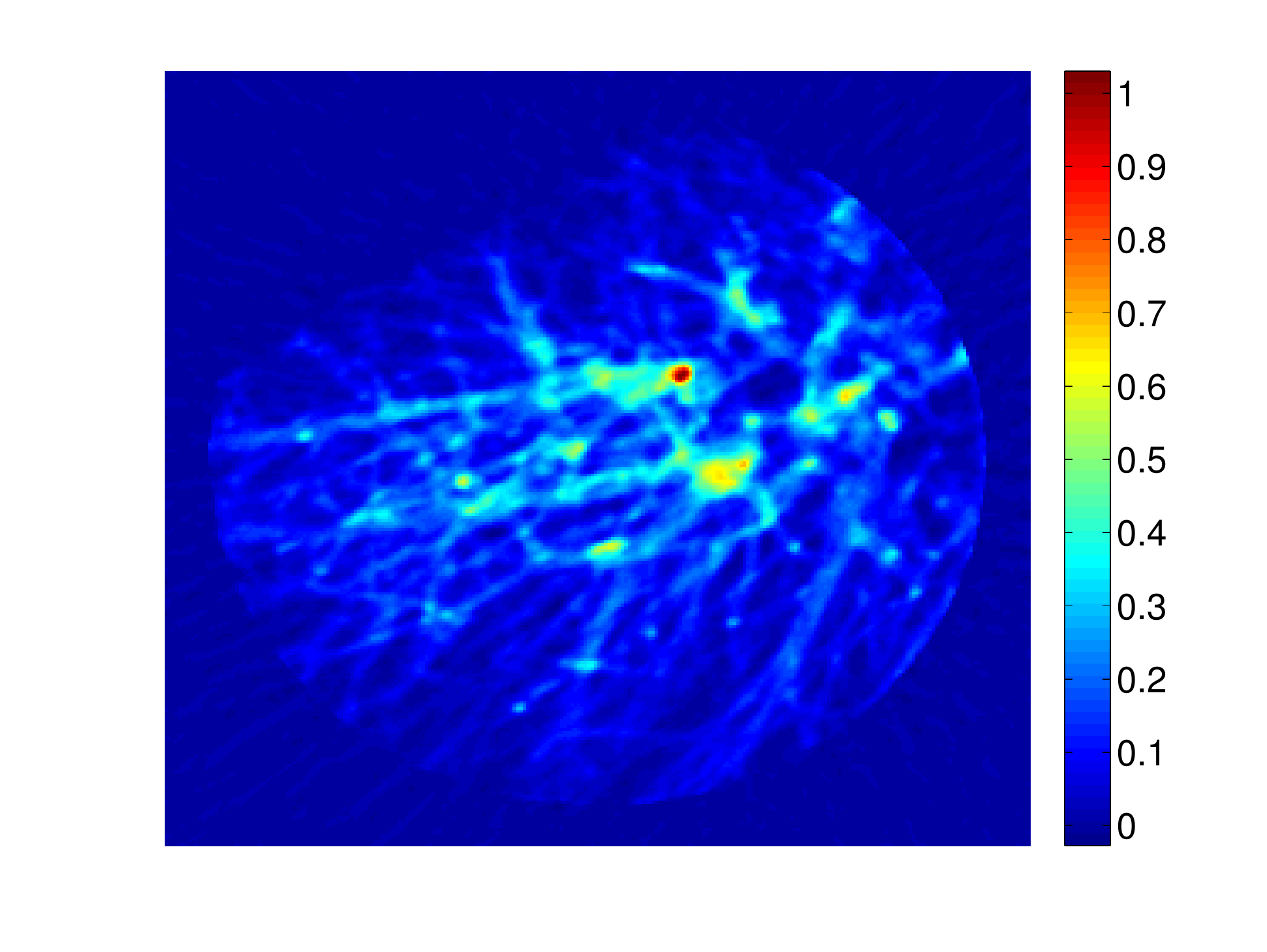}
\includegraphics[width=0.45\columnwidth]{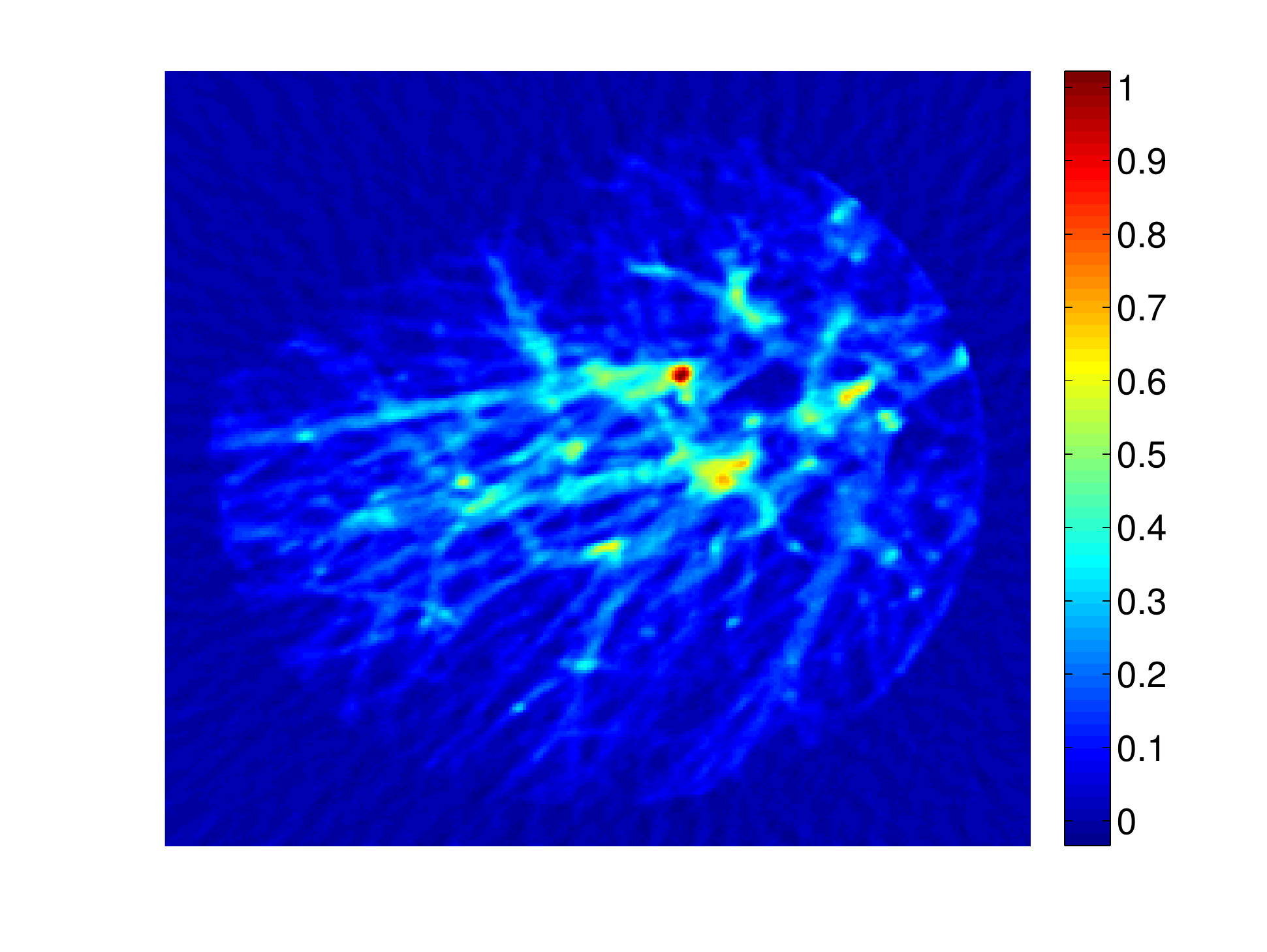}
        \caption{Reconstructions  of blood  vessel  image from sparse measurements (left) and Bernoulli measurements (right).
  Top row: FBP reconstruction.
  Second row: Joint $\ell^1$-minimization.
  Third row: Residual network.
  Bottom  row: Nullspace network.
  \label{fig:vessel}}
  \end{figure}

\begin{table}[htb!]
\caption{Performance averaged over 50 blood vessel images.\label{tab:vessel}}
\centering
\begin{tabular}{ l | c c c}
\toprule
      & SME & PSNR &  SSIM  \\
\midrule
 \multicolumn{2}{c}{Sparse measurements}\\
    FBP       & \colorbox{white}{\SI{15.1d-4}{}}  & \colorbox{white}{28.6} &  \colorbox{white}{\SI{4.83d-1}{}}   \\
    $\ell^1$-minimization             & \colorbox{white}{\SI{3.35d-4}{}}    &  \colorbox{white}{35.0}   &  \colorbox{white}{\SI{8.50d-1}{}}    \\
    residual network        & \colorbox{white}{\SI{3.08d-4}{}}   & \colorbox{white}{35.6} & \colorbox{green}{  \SI{9.30d-1}{} }     \\
    nullspace network &  \colorbox{green}{ \SI{2.22d-4}{}}  & \colorbox{green}{ 37.0 }& \SI{9.17d-1}{}     \\
\midrule
\multicolumn{2}{c}{Bernoulli measurements}\\
    FBP       &\colorbox{white}{ \SI{20.2d-4}{}}  & \colorbox{white}{27.3}  &  \colorbox{white}{\SI{4.18d-1}{}}    \\
    $\ell^1$-minimization          &  \colorbox{green}{  \SI{1.89d-4}{} }     & \colorbox{green}{  37.5 }  & \colorbox{green}{  \SI{9.06d-1}{} }    \\
    residual network        & \colorbox{white}{\SI{6.32d-4}{}}  & \colorbox{white}{32.6} & \colorbox{white}{\SI{8.89d-1}{}}     \\
    nullspace network & \colorbox{white}{\SI{2.21d-4}{}}  & \colorbox{white}{36.9}  & \colorbox{white}{\SI{8.89d-1}{}}     \\
  \bottomrule
\end{tabular}
\end{table}

From Table~\ref{tab:vessel} we see that  the hybrid
as well as the deep learning based methods significantly
outperform the FBP reconstruction.
Moreover, the deep learning approach even outperforms
the joint recovery approach for the sparse sampling.
The nullspace network in all cases decreases the MSE (increases the PSNR)
compared to the residual network.

\subsection{Shepp-Logan type phantom}

Next we  investigate the performance on a Shepp-Logan type phantom
that contains structures    completely  different from the training data.
For the joint recovery approach, we use   50  iterations of the iterative
thresholding procedure with
 $\al = 0.001$  regularization parameter $\beta = 0.005$ and step size  $\mu = 0.1$.
 For the nullspace network we use
$\rec_{\theta}^{\mathrm{null}, (10)}$.  Results  are shown in
 Figure~\ref{fig:head}.    Table~\ref{tab:head} shows the MSE,  the PSNR  and the SSIM
 for the head phantom,  where  the best results are again framed.

\begin{figure}[htb!]
\includegraphics[width=0.45\columnwidth]{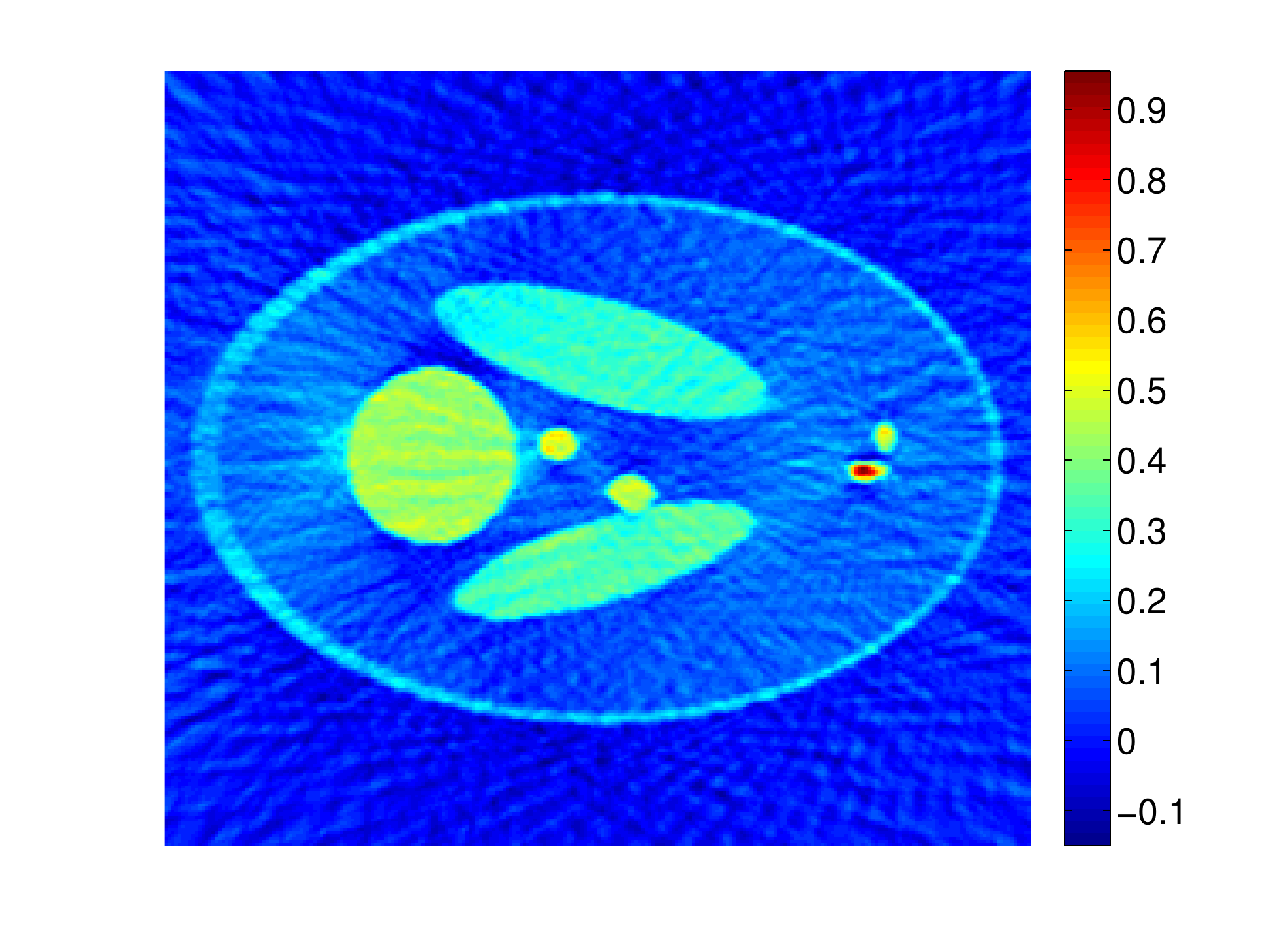}
\includegraphics[width=0.45\columnwidth]{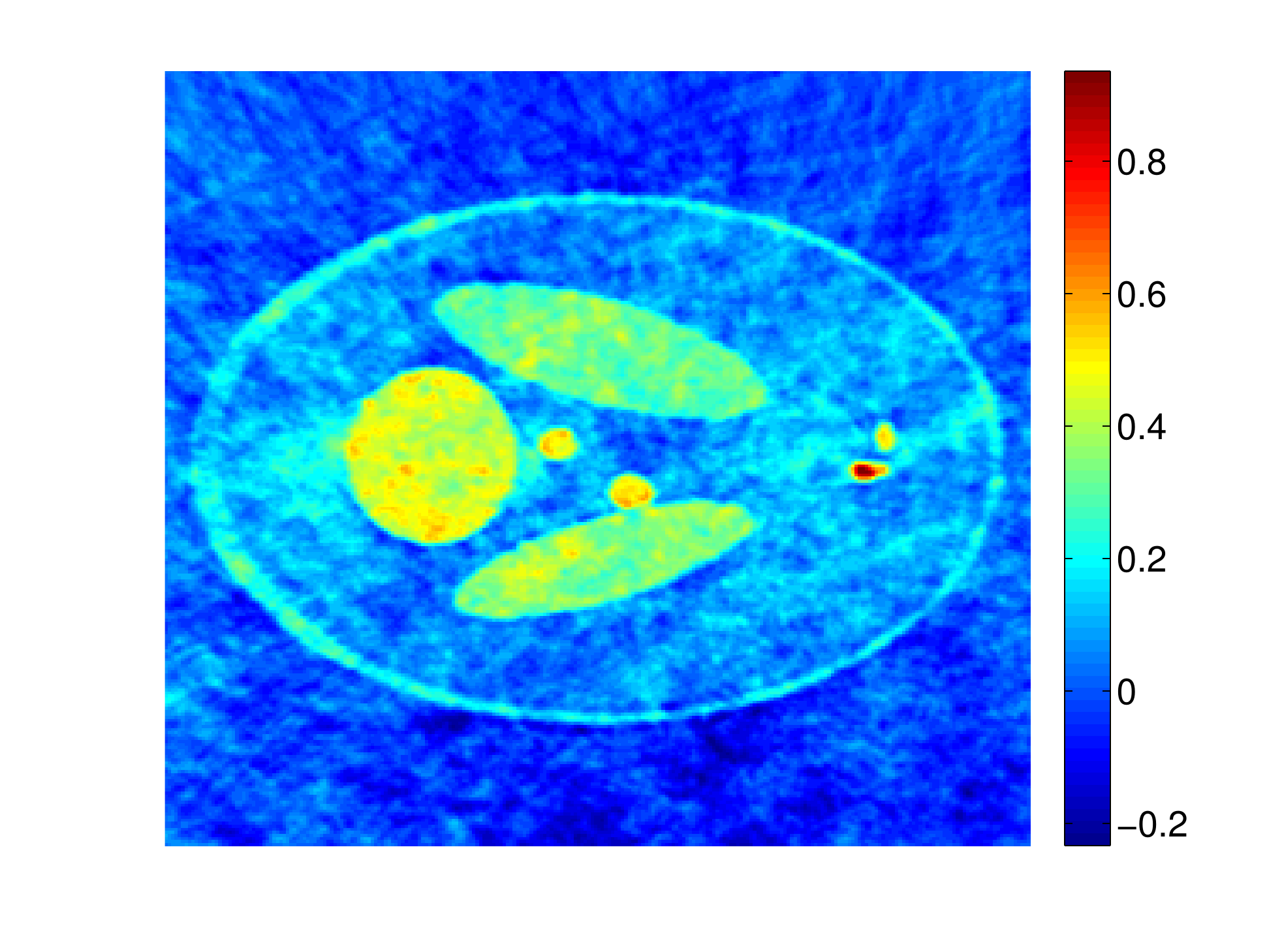}\\
\includegraphics[width=0.45\columnwidth]{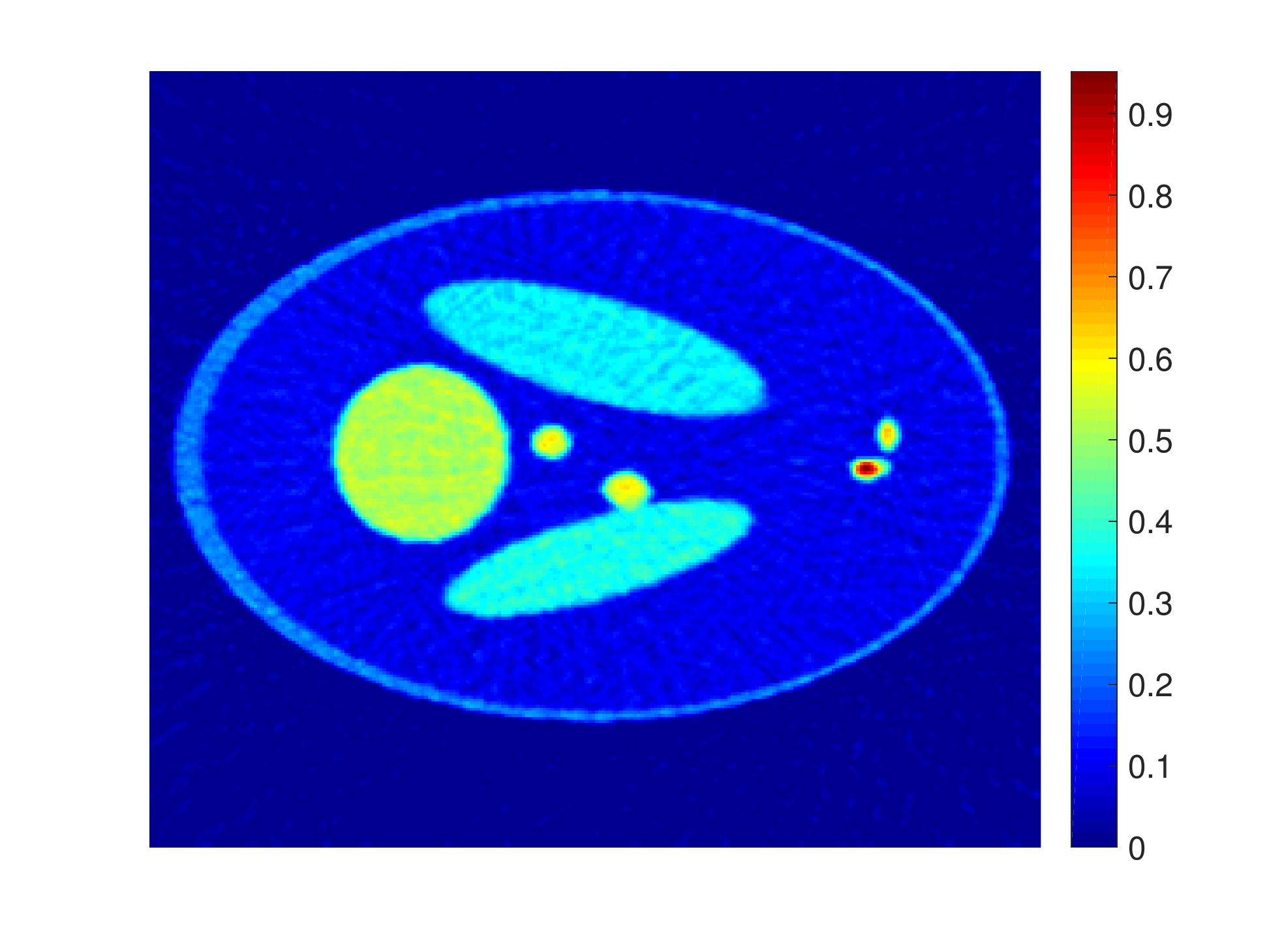}
\includegraphics[width=0.45\columnwidth]{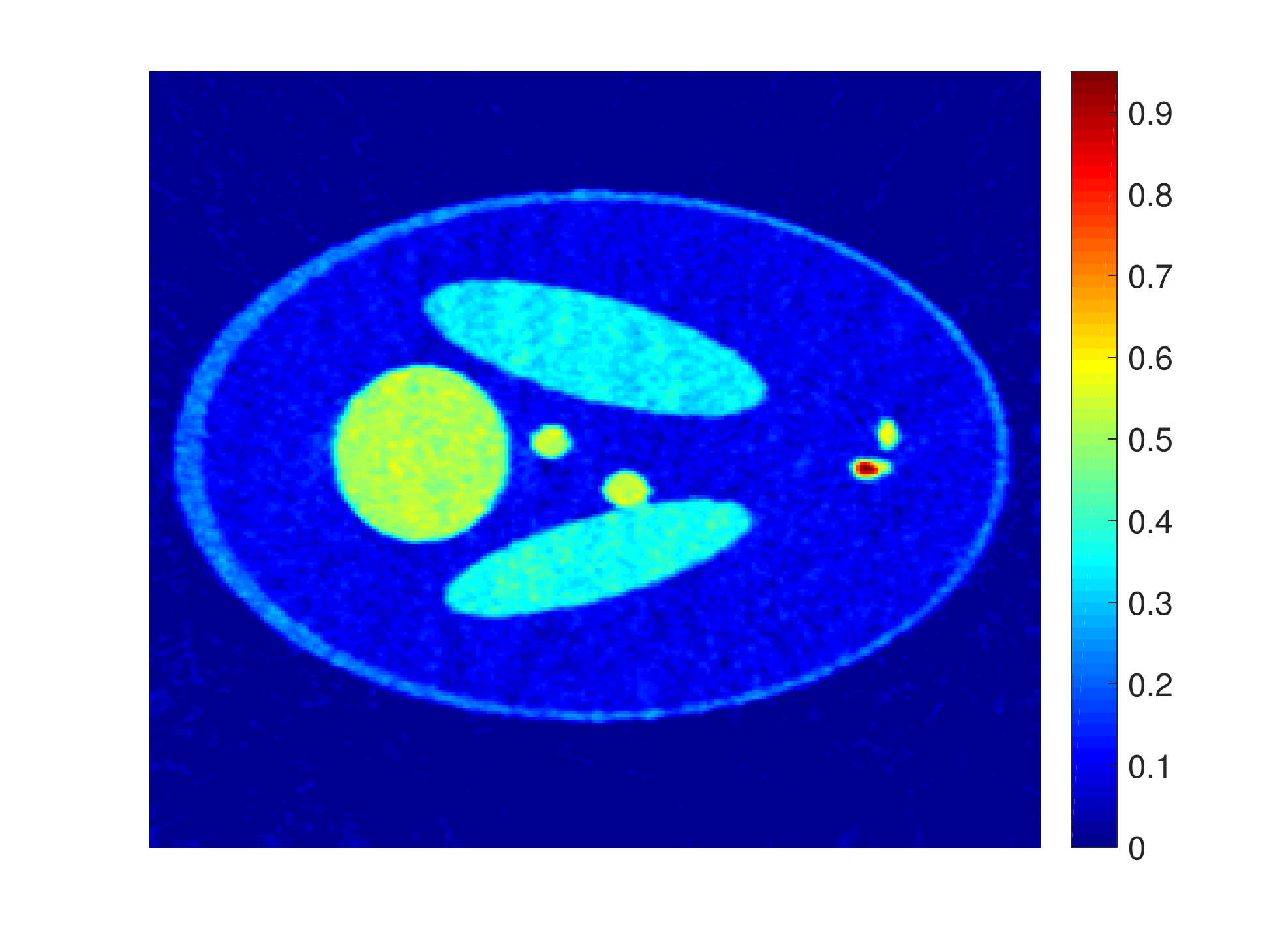}\\
\includegraphics[width=0.45\columnwidth]{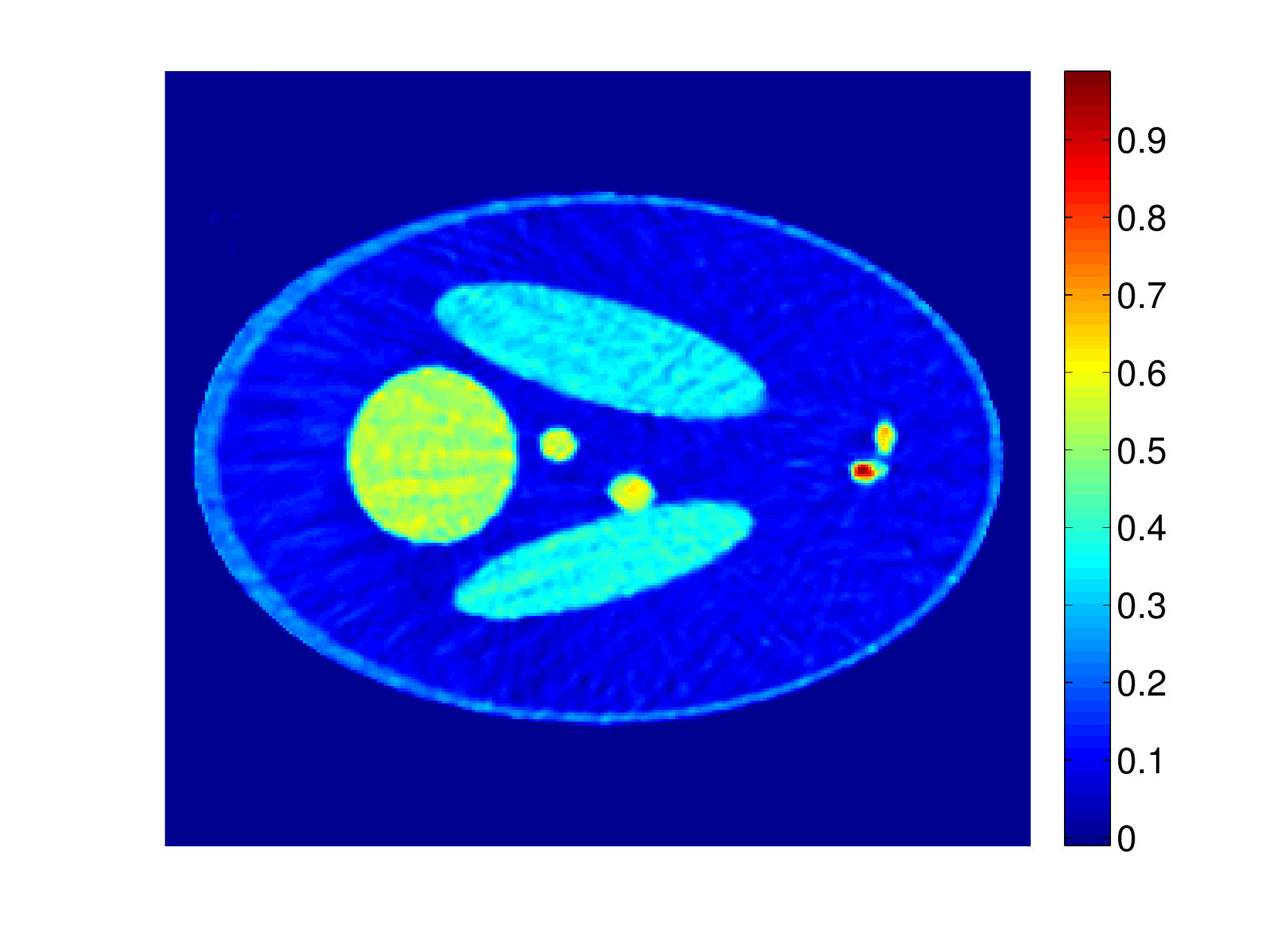}
\includegraphics[width=0.45\columnwidth]{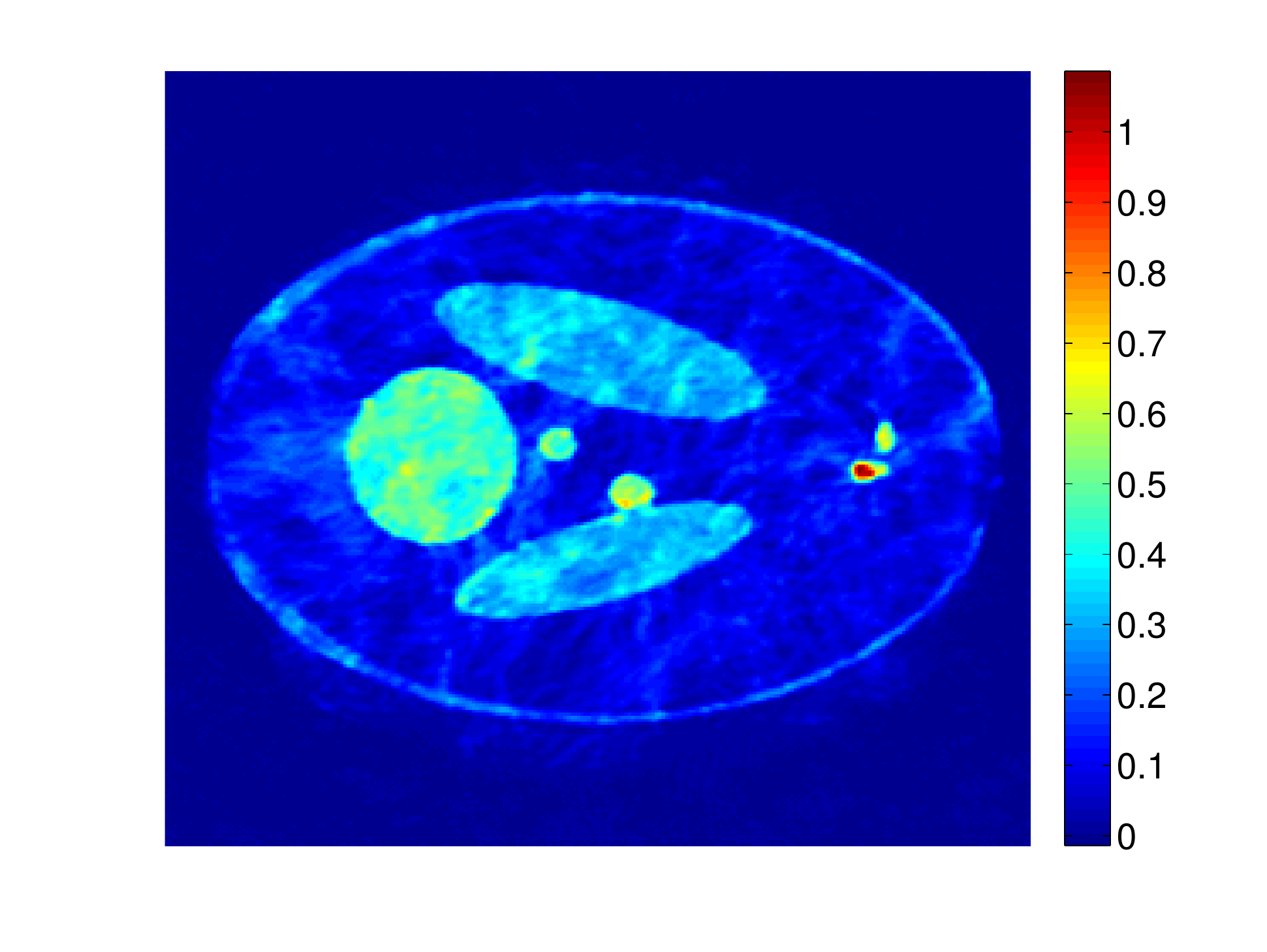}\\
\includegraphics[width=0.45\columnwidth]{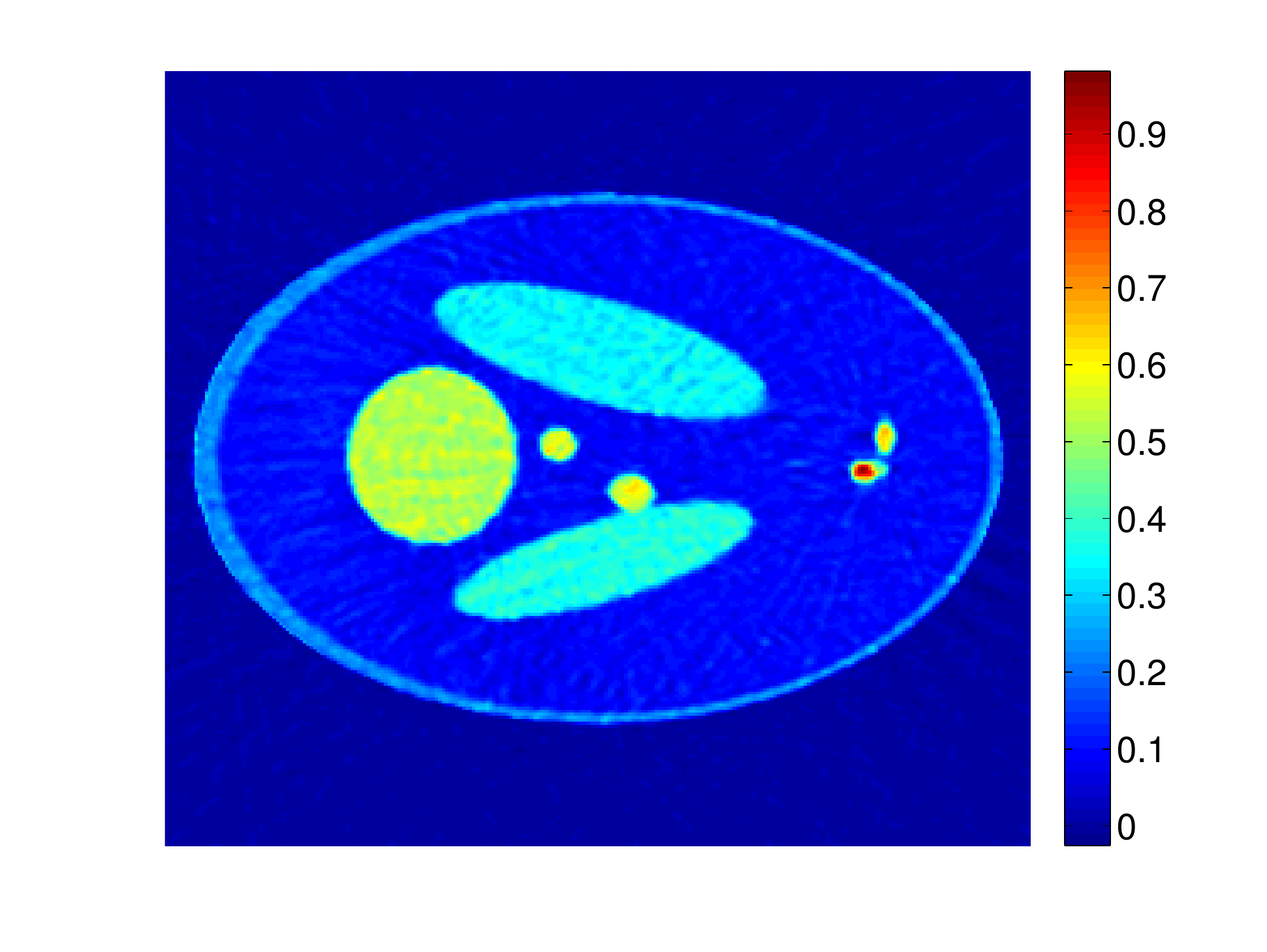}
\includegraphics[width=0.45\columnwidth]{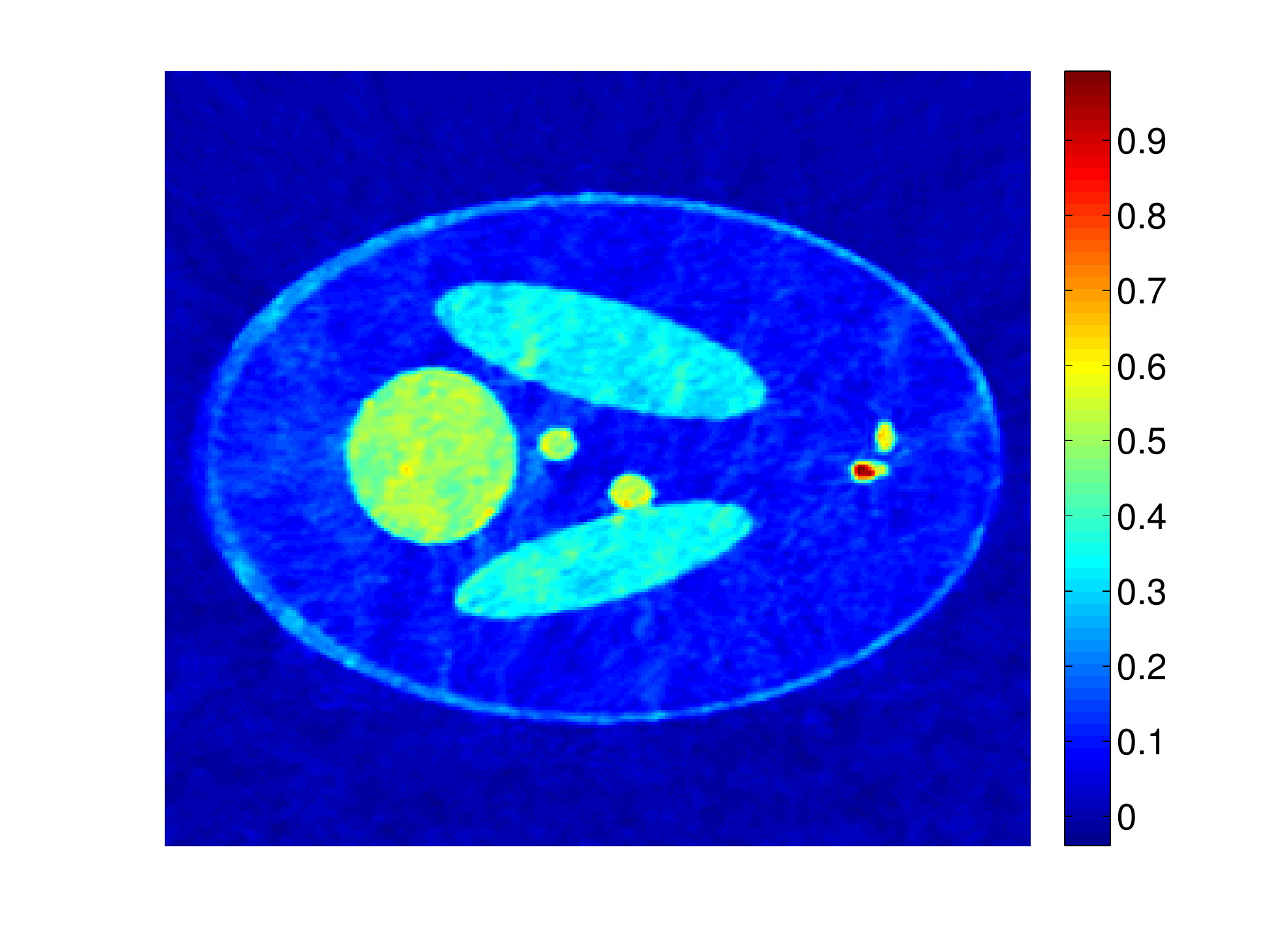}
        \caption{Reconstructions  of  Shepp-Logan type phantom  from sparse measurements (left) and Bernoulli measurements (right).
  Top row: FBP reconstruction.
  Second row: Joint $\ell^1$-minimization.
  Third row: Residual network.
  Bottom  row: Nullspace network.
  \label{fig:head}}
        \end{figure}

        \begin{table}[htb!]
\caption{Performance for the Shepp-Logan phantom.\label{tab:head}}
\centering
\begin{tabular}{ l | c  c  c}
\toprule
      & SME & PSNR &  SSIM  \\
\midrule
\multicolumn{2}{c}{Sparse measurements}\\
    $\ell^1$-minimization             &  \colorbox{white}{\SI{6.73d-4}{}}     &  \colorbox{white}{31.7}   &  \colorbox{white}{\SI{8.04d-1}{}}     \\
    residual network        &    \colorbox{white}{\SI{6.32d-4}{}}     & \colorbox{white}{32.0}   & \colorbox{green}{  \SI{8.90d-1}{} }      \\
    nullspace network &   \colorbox{green}{  \SI{5.29d-4}{} }    & \colorbox{green}{  32.8 }  & \colorbox{white}{\SI{8.59d-1}{}}       \\
\midrule
\multicolumn{2}{c}{Bernoulli measurements}\\
    $\ell^1$-minimization             &  \colorbox{green}{   \SI{6.03d-4}{} }    & \colorbox{green}{  32.2 }  & \colorbox{green}{  \SI{8.19d-1}{} }     \\
    residual network        &    \colorbox{white}{\SI{19.2d-4}{} }     & \colorbox{white}{ 27.2 }  & \colorbox{white}{ \SI{7.63d-1}{} }       \\
    nullspace network &   \colorbox{white}{ \SI{6.92d-4}{}  }   & \colorbox{white}{ 31.6 }  & \colorbox{white}{ \SI{7.67d-1}{}  }     \\
  \bottomrule
\end{tabular}
\end{table}

 As the considered Shepp-Logan type phantom is  very different from
 the training data it is not surprisingly the standard residual network
 does not perform that well for the Bernoulli measurements.
 Surprisingly the residual network still works well in the sparse data case.
 The nullspace network yields significantly improved
 results compared for the residual network, especially for the
 Bernoulli case. In the Bernoulli case,  the $\ell^1$-minimization approach
 performs best, however only slightly better than the approximate nullspace
 network.

\section{Conclusion}
\label{sec:conclusion}

 In this paper we compared    $\ell^1$-minimization
 with deep learning  for CS PAT image reconstruction.
 The two approaches have been tested  on  blood vessel
 data (test data not contained in the training set that consists of similar  objects)
 as well as a Shepp-Logan type phantom (with structures very  different  from the training data).
 For the CS PAT measurements we   considered  deterministic subsampling
 as well as random Bernoulli measurements.  For the  used reconstruction networks, we considered
 the Unet with residual connection and an approximate nullspace network which contains an
 additional  data consistency layer.

In terms of  reconstruction quality, our findings can be summarized as follows:
\begin{enumerate}
\item
Sparse recovery and deep learning both significantly outperforms filtered backprojection
for both measurement matrices.
If the training data are not accurate for the object to be reconstructed, for  the deep learning
approach this conclusion only holds for the null-space network.

\item
In the case of the sparse measurement matrix, the deep learning  approach outperforms
$\ell^1$-minimization. In the case of Bernoulli measurement, the $\ell^1$-minimization  algorithms
yields better performance.

\item
The nullspace network contains a data consistence  layer  and yields good results even
for phantoms very different from the training data. Even for the test data similar to the training
 data it yields an improved PSNR compared to the residual network
\end{enumerate}

According to the above results we can recommend the $\ell^1$-minimization  algorithm in the   case of random measurements  and  the nullspace  network in the case of sparse  measurements. 
We point out that application of the CNN  only takes fractions of second
(actually, less than $0.01$ seconds) in Keras whereas the joint recovery approach requires around 
2 minutes for 50  iterations in Matlab.  Note that  this comparison is not completely 
fair and with a recent   GPU implementation  in PyTorch  we have been able the reduce the 
computation time  to about one second for 50 iterations. 
Nevertheless, the deep learning based methods are still significantly faster. 
Therefore,  especially the nullspace network is very promising for high quality real-time CS PAT 
imaging.

\section*{Acknowledgement}

The work of M.H and S.A. has been supported by the Austrian Science Fund (FWF), project P 30747-N32.

\end{document}